\documentclass[11pt]{amsart}

\usepackage{amsfonts, amstext, amsmath, amsthm, amscd, amssymb}
\usepackage{epsfig, graphics, psfrag, overpic}
\usepackage{color}
\usepackage[
pdfborderstyle={},
pdfborder={0 0 0}]{hyperref}
% pagebackref,
%	\renewcommand{\backreftwosep}{\backrefsep}
%	\renewcommand{\backreflastsep}{\backrefsep}
%	\renewcommand*{\backref}[1]{}
%	\renewcommand*{\backrefalt}[4]{
%	  \ifcase #1 %
%	   [No citations.]%
%	  \or
%	   [#2]%
%	  \else
%	   [#2]%
%	  \fi
%	}

%%  If the following line is uncommented, we see the labels of theorems,
%% figures, etc. in the margins.
%\usepackage[notref,notcite]{showkeys}

 \textwidth 6.07in
 \textheight 8.63in
 \oddsidemargin 0.18in
 \evensidemargin 0.18in
 \topmargin -0.07in

%% To fix marginpar
\setlength{\marginparwidth}{0.8in}
\let\oldmarginpar\marginpar
\renewcommand\marginpar[1]{\oldmarginpar[\raggedleft\footnotesize #1]%
{\raggedright\footnotesize #1}}

\renewcommand{\setminus}{{\smallsetminus}}

%%% Symbols %%%

\newcommand{\bdy}{{\partial}}
\newcommand{\s}{{\boldsymbol{s}}}
\newcommand{\w}{{\boldsymbol{w}}}

\newcommand{\area}{{\rm area}}
\newcommand{\abs}[1]{{\left\vert #1 \right\vert}}

\theoremstyle{plain}
\newtheorem{theorem}{Theorem}[section]
\newtheorem{corollary}[theorem]{Corollary}
\newtheorem{lemma}[theorem]{Lemma}
\newtheorem{prop}[theorem]{Proposition}

\newtheorem*{namedtheorem}{\theoremname}
\newcommand{\theoremname}{testing}

\theoremstyle{definition}
\newtheorem{define}[theorem]{Definition}
\newtheorem{remark}[theorem]{Remark}

%\newtheorem*{example}{Example}
%\newtheorem*{question}{Question}
%\newtheorem*{algorithm}{Algorithm}
%\newtheorem{remark}[theorem]{Remark}
%\newtheorem{notation}[theorem]{Notation}

%% ------------------------------------------------------------

\begin{document}
\title{Essential surfaces in highly twisted link complements}

\author{Ryan Blair}
\address{Department of Mathematics and Statistics,
California State University,
1250 Bellflower Blvd,
Long Beach, CA 90840}
\email{ryan.blair@csulb.edu}

\author{David Futer}
\address{Department of Mathematics, Temple University, 1805 North Broad St., Philadelphia, PA 19122}
\email{dfuter@temple.edu}
\thanks{Futer is supported in part by NSF grants DMS--1007221 and DMS--1408682.}

\author{Maggy Tomova}
\address{Department of Mathematics, University of Iowa, 14 MacLean Hall, Iowa City, IA 52242}
\email{maggy-tomova@uiowa.edu}
\thanks{Tomova is supported in part by NSF grant DMS--1054450.}

% \address{Department of Mathematics, Temple University,
% Philadelphia, PA 19122}

% \email{dfuter@temple.edu}

% \thanks{{Supported in part by NSF grant DMS--1007221.}}

\thanks{ \today}

\begin{abstract}
We prove that in the complement of a highly twisted link, all closed, essential, meridionally incompressible surfaces must have high genus. The genus bound is proportional to the number of crossings per twist region. A similar result holds for surfaces with meridional boundary: such a surface either has large negative Euler characteristic, or is an $n$--punctured sphere visible in the diagram.
\end{abstract}

\maketitle

\section{Introduction}

Links in $S^3$ are most easily visualized via a projection diagram. However, obtaining topological and geometric information directly from link diagrams has proved to be a difficult task. Historically, alternating links are one of the few classes of links for which this information has been accessible. For instance, links with prime alternating diagrams contain no incompressible tori \cite{menasco:alternating}, and have minimal--genus Seifert surfaces constructible directly from the diagram \cite{crowell:genus, murasugi:genus}. The goal of this paper is to extend results in this vein to diagrams with a high degree of twisting. To state our results, we must define what this means.

A \emph{bigon} in a link diagram $D(K)$ is a disk in the projection plane, whose boundary consists of two arcs in the projection of $K$. Define an equivalence relation on crossings in a diagram, in which two crossings  are considered equivalent if they are connected by a string of one or more consecutive bigons. Then, a \emph{twist region} of a diagram is an equivalence class of crossings. The minimal number of crossings in a twist region of $D(K)$ is called the \emph{height} of $D$, denoted $h(D)$, and the number of twist regions of $D(K)$ is called the \emph{twist number}, denoted $t(D)$.

The height and twist number of a diagram turn out to be deeply related to the geometric structure of the link it depicts. Lackenby showed that given a prime alternating diagram, the hyperbolic volume of the link complement is bounded both above and below by linear functions of the twist number $t(D)$ \cite{lackenby:volume}. Futer, Kalfagianni, and Purcell extended these volume estimates to non-alternating diagrams for which $h(D) \geq 7$; that is, diagrams where every twist region contains at least $7$ crossings \cite{fkp:volume}. Additionally, the results of Futer and Purcell \cite{futer-purcell:heegaard} imply that when $h(D)$ is large, there is a close connection between the link diagram and any generalized Heegaard decomposition for the exterior of $K$.

In this paper, we show that $h(D)$ provides a linear lower bound on the genus of essential surfaces in a link complement. Stating our results precisely requires several definitions.

A link diagram is \emph{prime} if every simple closed curve in the projection plane $P$ that meets $D(K)$ transversely in two points in the interior of edges bounds a disk in $P$ that is disjoint from all crossings of the diagram. A diagram is called \emph{twist-reduced} if, for every simple closed curve in $P$ that meets $D(K)$ in exactly two crossings, those two crossings belong to the same twist region. (See Figure \ref{fig:flype}, left.) We will implicitly assume that the diagram $D(K)$ is connected and alternating within each twist region (so the configuration of Figure \ref{fig:flype}, right cannot occur). It is easy to verify that every prime link $K$ has a prime, twist--reduced diagram, with alternating twist regions. This can be achieved by first applying a maximal number of type II Reidermeister moves that eliminate crossings, followed by applying flypes to consolidate crossings into a minimal number of twist regions.

\begin{figure}
    \centering
    \includegraphics[scale=.4]{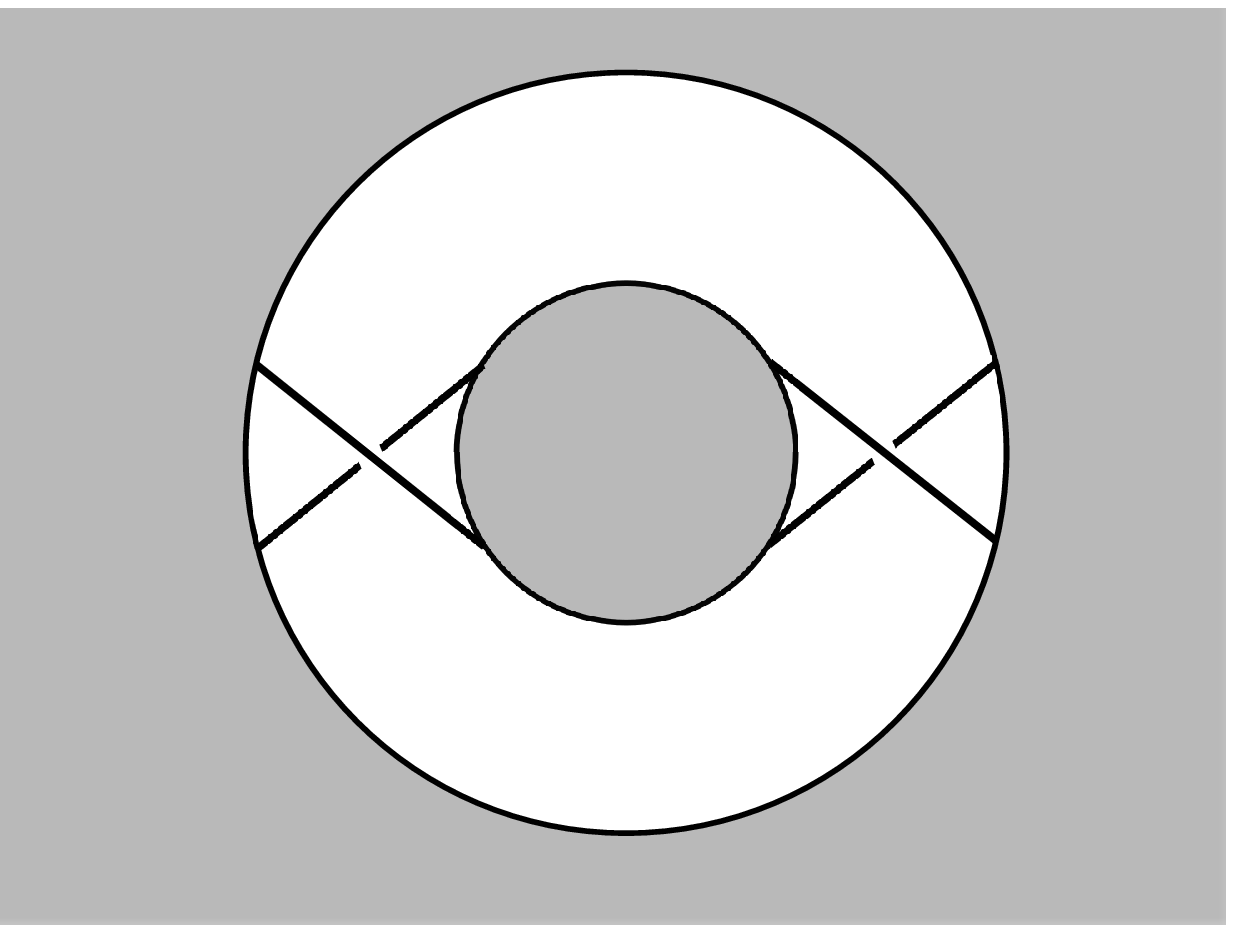}
\hspace{1in}
    \includegraphics[scale=.4]{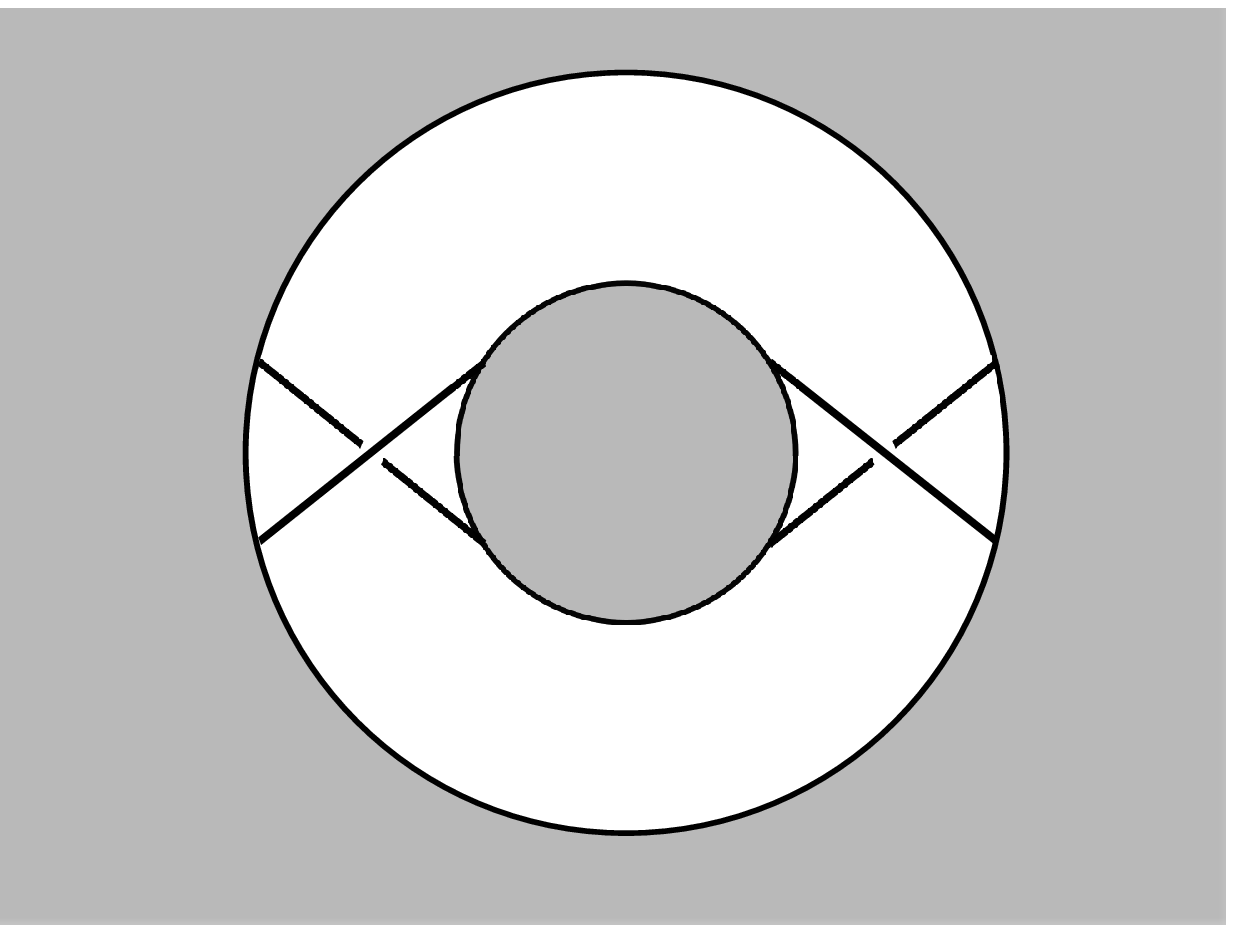}
    \caption{Left: in a twist--reduced diagram, these crossings must belong to the same twist region. Right: in a twist--reduced diagram with alternating twist region, this configuration cannot occur.}
    \label{fig:flype}
    \label{fig:R3}
\end{figure}

A surface embedded in $S^3$ is \emph{$n$--punctured} if it meets $K$ transversely in exactly $n$ points. Two $n$--punctured surfaces are equivalent if they are transversely isotopic with respect to $K$. A surface $F$ embedded in $S^3$ is \emph{c--incompressible} if every disk or 1--punctured disk $D$ embedded in $S^3$ such that $D\cap F=\partial D$ is transversely isotopic to a disk or 1--punctured disk contained in $F$ while fixing the boundary. Although c-incompressibility is a strictly stronger condition than incompressibility, it is often better behaved than incompressibility and more natural to use when studying surfaces in link exteriors. We can now state the main theorem.

\begin{theorem}\label{thm:closed}
Let $K\subset S^3$ be a link with a connected, prime, twist--reduced diagram $D(K)$. Suppose $D(K)$ has at least $2$ twist regions and $h(D) \geq 6$. Let $F \subset S^3 \setminus K$ be a closed, essential, $c$--incompressible surface in the link complement. Then $\chi(F) \: \leq \: 5 -   h(D).$

Furthermore, if $K$ is a knot, then
$\chi(F) \: \leq \: 10 -  2 h(D).$
\end{theorem}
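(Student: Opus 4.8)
The plan is to isotope $F$ into a taut position relative to a decomposition of $S^3\setminus K$ determined by the twist regions, and then to charge a definite amount of negative Euler characteristic to each crossing of whichever twist region $F$ is forced to penetrate. For each twist region $T_i$, fix a \emph{crossing ball} $B_i\subset S^3$ meeting $K$ in exactly the two--strand tangle of $T_i$, so that $\abs{\partial B_i\cap K}=4$; inside $B_i$ the $c_i\ge h(D)$ crossings of $T_i$ are separated by a stack of \emph{bigon disks} $E_{i,1},\dots,E_{i,c_i-1}$, each meeting $K$ in two points, consecutive ones offset by a half--twist, which cut $B_i$ into $c_i$ one--crossing balls. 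Write $\Sigma=\bigcup_i\partial B_i$ and $\mathcal E=\bigcup_{i,j}E_{i,j}$. Using that $D(K)$ is connected, prime, twist--reduced and alternating within each twist region, one checks that every $\partial B_i$ is an essential $4$--punctured sphere in $S^3\setminus K$, that each $E_{i,j}$ is an essential twice--punctured disk, and that the complementary ``product'' regions outside $\bigcup_i B_i$ and the one--crossing balls harbor no troublesome essential surfaces of their own.

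Next, isotope $F$ so as to minimize, lexicographically, $\abs{F\cap\Sigma}$ and then $\abs{F\cap\mathcal E}$. Innermost--disk and outermost--arc exchanges, powered by the $c$--incompressibility of $F$ (which forbids $F$ from cobounding a disk or once--punctured disk with any embedded disk or once--punctured disk meeting $F$ only in its boundary) together with the incompressibility of the pieces of $\Sigma\cup\mathcal E$, remove all inessential circle components of $F\cap(\Sigma\cup\mathcal E)$, all $\partial$--parallel arcs, and the ``half--bigon'' patterns and the flype and Reidemeister~III configurations that twist--reducedness and the alternating hypothesis are designed to preclude. After this cleanup, every arc of $F\cap E_{i,j}$ is essential in the twice--punctured disk $E_{i,j}$, $F\cap\partial B_i$ is a union of essential curves on a $4$--punctured sphere, and $F$ restricts to an incompressible, $\partial$--incompressible surface in each one--crossing ball; a standard primeness argument shows $F$ cannot be isotoped off $\Sigma$, so $F$ enters some crossing ball $B_i$, and since $h(D)$ is the \emph{minimal} crossing number over twist regions, $c_i\ge h(D)$.

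The combinatorial heart is the count inside $B_i$. An essential arc of $F$ on $E_{i,j}$ cannot terminate in an adjacent one--crossing ball without producing a compressing or $\partial$--compressing disk for $F$ there, against tautness, so it must continue to an essential arc on $E_{i,j\pm1}$; because consecutive bigon disks are offset by a half--twist, such arcs cannot simply run straight up the stack but must repeatedly re--interact with $\partial B_i$ or with one another, and, using that the alternating condition blocks cancellation, one concludes that at least one essential arc survives on each of the $c_i-1$ bigon disks. An Euler characteristic (Gauss--Bonnet--type) tally --- one negative unit per essential intersection arc, nonpositive contributions from the one--crossing balls, and a bounded correction from the two cap regions of $\partial B_i$ --- then gives $\chi(F\cap B_i)\le 5-c_i\le 5-h(D)$. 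Since no component of $\overline{F\setminus B_i}$ can be a disk (its boundary curves are essential on the incompressible surface $\partial B_i$) and $F$ is not contained in the handlebody $B_i\setminus K$, we have $\chi(\overline{F\setminus B_i})\le 0$; as $F\cap\partial B_i$ is a union of circles, additivity of $\chi$ gives $\chi(F)=\chi(F\cap B_i)+\chi(\overline{F\setminus B_i})\le 5-h(D)$. The hypothesis $h(D)\ge6$ makes this nonvacuous, and $t(D)\ge 2$ rules out the Seifert fibered $(2,n)$--torus link, in whose complement no such $F$ exists.

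For the sharper bound when $K$ is a knot, the two strands through any twist region lie on the same component, so the separating surface $F$ meets the stack of bigon disks of $T_i$ in an \emph{even} number of essential arcs --- or else $F$ must interact nontrivially with a second twist region, available because $t(D)\ge2$ --- and this doubles the tally above to $\chi(F)\le 10-2h(D)$. The step I expect to be the genuine obstacle is the taut--positioning argument: wringing out \emph{all} of the inessential and diagrammatically visible intersection patterns of $F$ with $\Sigma\cup\mathcal E$ --- in particular exploiting $c$--incompressibility rather than mere incompressibility, and using twist--reducedness together with the alternating condition to eliminate the exceptional configurations --- is where the case analysis and the bulk of the length reside; once the intersection is genuinely taut, the count inside $B_i$ is bookkeeping resting on the single observation that consecutive bigon disks differ by a half--twist.
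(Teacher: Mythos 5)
Your plan diverges completely from the paper (which augments the link with crossing circles, normalizes $F^\circ$ in the two right-angled ideal polyhedra of the augmented link complement, shows each meridian of a crossing circle runs through $n_i+1$ segments on the cusp torus because the filling slope is homologous to $\w+n_i\s$, and then applies a combinatorial Gauss--Bonnet count), and the place where your version breaks is exactly the step you call ``bookkeeping.'' The inequality $\chi(F\cap B_i)\le 5-c_i$ cannot be obtained by the tally you describe (``one negative unit per essential intersection arc, nonpositive contributions from the one--crossing balls''): cutting $F\cap B_i$ along its arcs of intersection with the bigon disks produces pieces in the one--crossing balls that can perfectly well be disks, e.g.\ squares with one arc on $E_{i,j}$, one on $E_{i,j+1}$, and two on the wall $\partial B_i$. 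A helical strip of $c_i$ such squares, spiralling around the tangle because of the half--twist offsets, meets every bigon disk in an essential arc and yet is a single disk with $\chi=+1$. So the negative Euler characteristic forced by the twisting is \emph{not} localized inside the crossing ball; what the twisting actually forces is a long boundary pattern of $F$ along the twist region, and the cost must be charged to the surface globally, using primeness and twist--reducedness of the whole diagram. That is precisely why the paper passes to the augmented link: there, each puncture of $F$ by a crossing circle yields a boundary curve on the cusp torus with $n_i+1$ segments, and each segment is charged at least $\pi/3$ of combinatorial area of a normal disk in the (fixed, globally defined) right-angled polyhedra; Gauss--Bonnet then converts area into $-\chi(F^\circ)$. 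Without some substitute for this area/angle accounting and for the transfer of cost outside $B_i$, your decomposition $\chi(F)=\chi(F\cap B_i)+\chi(\overline{F\setminus B_i})$ with $\chi(\overline{F\setminus B_i})\le 0$ cannot give the theorem.

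Two further steps are asserted rather than proved, and both correspond to real lemmas in the paper. First, that $F$ cannot be isotoped off all the crossing balls, and that once it enters $B_i$ it must leave an essential arc on \emph{every} bigon disk: ``the alternating condition blocks cancellation'' is not an argument, and the paper's analogues (Lemma \ref{lemma:int-1-circle}, and Lemma \ref{lemma:cusp-curve} via the filling slope) require the normal form and the homological identification of the meridian, not just alternation. Second, the constants: the paper's bound $5-h(D)$ needs $F$ to meet at least \emph{three} distinct crossing circles (Lemma \ref{lemma:int-3-circles}, which is where twist--reducedness is used), each in at least two points, and the knot case $10-2h(D)$ comes from the parity argument forcing at least four intersections per circle (Lemma \ref{lemma:multiple-int}), giving $b\ge 12$ in the inequality $\chi(F)\le \frac{b}{6}(5-h(D))$. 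Your single--ball argument never engages with the three--region requirement, and the knot-case ``doubling'' is not derivable from parity alone in your localized setup. In short, the architecture of charging a fixed twist region with all the Euler characteristic is the gap; the paper's augmentation-plus-combinatorial-area machinery (or a genuine replacement for it) is needed, not optional.
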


A special case of Theorem \ref{thm:closed} was proved by Futer and Purcell \cite[Theorem 1.4]{futer-purcell:surgery}: if $h(D) \geq 6$, then $\chi(F)<0$, which implies that $F$ cannot be a sphere or torus.

There is an analogous statement for surfaces with meridional boundary.

\begin{theorem}\label{thm:meridional}
Let $K\subset S^3$ be a link with a connected, prime, twist--reduced diagram $D(K)$. Suppose $D(K)$ has at least $2$ twist regions and $h(D) \geq 6$. Let $F \subset S^3 \setminus K$ be a connected, essential, $c$--incompressible surface in $S^3 \setminus K$, whose boundary consists of meridians of $K$. Then one of two conclusions holds:
\begin{enumerate}
\item $F$ is a sphere with $n$ punctures, which intersects the projection plane in a single closed curve that meets the link $n$ times and is disjoint from all twist regions.
\item $\displaystyle{\chi(F) \: \leq \: 5 -   h(D)}$.
\end{enumerate}
In other words: either $F$ is ``visible in the projection plane'', or we obtain the same Euler characteristic estimate as in Theorem \ref{thm:closed}.
\end{theorem}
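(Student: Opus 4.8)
The plan is to reduce the meridional case to the closed case (Theorem \ref{thm:closed}) by the standard device of "meridian compression" or, more precisely, by analyzing how $F$ interacts with the twist regions and the projection plane. First I would put $F$ into a good position with respect to the diagram: isotope $F$ so that it meets the $2$-spheres bounding the "twist region balls" (the balls that carry the consecutive bigons of each twist region) in a collection of arcs and simple closed curves that is minimal in some suitable complexity. Each twist region with $h(D)$ crossings contributes a sequence of $h(D)$ crossing balls, and the key combinatorial input—exactly as in \cite{fkp:volume} and \cite{futer-purcell:surgery}—is that an essential, c-incompressible $F$ cannot intersect too many of these crossing disks in trivial ways without being forced to compress or to be isotoped off the twist region. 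This is where the hypothesis $h(D)\geq 6$ enters and where the linear-in-$h(D)$ bound on $\chi(F)$ will come from: each twist region that $F$ genuinely "enters" costs $F$ roughly $h(D)$ worth of negative Euler characteristic, and since there are at least $2$ twist regions, one gets a bound of the shape $\chi(F)\leq 5-h(D)$.

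Second, I would handle the boundary. Since $\partial F$ consists of meridians of $K$, I would consider the trace of $F$ on the projection plane $P$. If $F$ can be isotoped to meet $P$ in a single simple closed curve disjoint from all twist regions (and meeting $K$ in the $n$ puncture points), then conclusion (1) holds: such a curve, together with the structure of the diagram between twist regions, forces $F$ to be the obvious $n$-punctured sphere "visible in the diagram." Otherwise, $F\cap P$ either has a component meeting a twist region, or has more than one component, or has a component that is inessential in a way that can be surgered away; in each of these cases the interaction with a twist region is nontrivial, and the argument of the first paragraph applies to give conclusion (2). The dichotomy "visible in $P$" versus "pays the full cost" is really the statement that there is no intermediate behavior—a surface that touches a twist region at all must touch it a lot.

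The main obstacle I anticipate is the bookkeeping needed to show that an essential, c-incompressible surface meeting a twist region must do so in a way that contributes $\Omega(h(D))$ to $-\chi(F)$, and in particular ruling out the configurations where $F$ "threads" a twist region using only a bounded number of intersection arcs (for instance via meridional compressions hidden inside the twist region). Handling these requires a careful normal-form/innermost-disk analysis inside each twist region ball, using c-incompressibility to upgrade would-be compressing disks and meridional compressing disks into isotopies; the twist-reduced and alternating-within-twist-regions hypotheses are exactly what prevent the bad $R3$-type configuration of Figure \ref{fig:R3} and keep the bigon chains under control. Once that local lemma is in hand, the global count—summing the contributions over the $\geq 2$ twist regions and the complementary regions of the diagram, and separately tracking the factor-of-$2$ improvement in the knot case (where $\partial$-parity forces $F$ to interact symmetrically with two twist regions)—is routine Euler characteristic addition, parallel to the proof of Theorem \ref{thm:closed}.
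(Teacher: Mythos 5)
Your proposal is a plan rather than a proof, and the step you defer --- showing that a surface which ``genuinely enters'' a twist region must pay $\Omega(h(D))$ in $-\chi(F)$ --- is precisely the entire technical content of the theorem; naming it as ``the main obstacle I anticipate'' and pointing to \cite{fkp:volume, futer-purcell:surgery} does not supply it. The paper's mechanism is quite specific and is not visible from the ``twist region ball'' picture you describe: one passes to the augmented link $L$, drills the crossing circles, puts $F^\circ = F \setminus \bigcup_i C_i$ into normal form with respect to the two right-angled ideal polyhedra, and then uses the fact that the meridian of a crossing circle in $S^3\setminus J$ becomes the slope $\w + n_i\s$ on the cusp torus of $S^3\setminus L$ (Lemma \ref{lemma:cusp-curve}). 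This is what forces \emph{each} boundary component of $F^\circ$ at a crossing circle to consist of $n_i+1 \geq h(D)+1$ segments, and Gauss--Bonnet for combinatorial area (Lemma \ref{lemma:pi-area}) converts each segment into at least $\pi/3$ of area. Without this (or an equivalent local lemma, which you do not prove), there is no source for the linear-in-$h(D)$ bound, and your claim that ``a surface that touches a twist region at all must touch it a lot'' remains an assertion.

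Two further points would break even the outline as stated. First, the constant in $\chi(F)\leq 5-h(D)$ is not ``routine Euler characteristic addition'': when $F^\circ$ meets the crossing circle cusps in only $b=2$ or $b=4$ boundary components, the naive count gives a bound weaker than claimed, and the paper must argue separately that in those cases every normal disk carries at most one or two crossing-circle segments, improving the per-segment cost to $\pi$ or $\pi/2$ (this is exactly the case analysis at the end of the paper's proof). Your sketch never confronts the low-intersection regime. Second, your dichotomy is set up backwards: you \emph{assume} either that $F$ can be isotoped to meet the projection plane in a single curve disjoint from the twist regions, or that it interacts nontrivially with a twist region, but a connected meridional surface could a priori meet $P$ in several closed curves all avoiding the twist regions, and you give no argument ruling this out. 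In the paper, conclusion (1) is \emph{derived}: if $F^\circ$ misses all crossing circle cusps then normality forces each disk of $F^\circ\cap P$ to lie in the white faces, and the checkerboard gluing of the two polyhedra forces $F$ to be a punctured sphere meeting the projection plane in a single curve. (Also, the factor-of-two improvement for knots belongs to Theorem \ref{thm:closed}, not to this statement.)
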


There is an interesting analogue between several results involving the height $h(D)$ and results involving distance of bridge surfaces. Distance is an integer measure of complexity for a bridge surface for a knot that has deep implications for the underlying topology and geometry of the knot exterior. The distance of a bridge surface bounds below the genus of certain essential surfaces in the knot exterior \cite{BS}, while Theorem \ref{thm:closed} and Theorem \ref{thm:meridional} demonstrate an analogous property for height. It is known that both diagrams with large height and bridge surfaces with large distance produce knots with no exceptional surgeries \cite{BCJTT, futer-purcell:surgery}. Additionally, both height and bridge distance give strong restrictions on the Heegaard surfaces for the knot exterior \cite{futer-purcell:heegaard, Tomova}.

The analogous results about height and bridge distance are all the more striking given that the two notions are in some ways orthogonal. For instance, for $2$--bridge knots, distance is essentially equal to the number of twist regions $t(D)$ in a minimal diagram \cite{Zupan}, while the height $h(D)$ is the minimal number of crossings per twist region. It would be interesting to know whether the analogous results are indicative of some deeper underlying structure.

Here is a brief outline of the proofs of Theorems \ref{thm:closed} and \ref{thm:meridional}. We begin by adding a number of extra link components to $K$, so that there is a link component encircling each twist region. (See Figure \ref{fig:augment}.) In Section \ref{sec:augment}, we review the construction of this \emph{augmented link} $L$, and show  that $F$ can be moved by isotopy into a favorable position with respect to the added link components. In Section \ref{sec:polyhedra}, we describe a decomposition of the augmented link complement into right-angled ideal polyhedra, and again isotope $F$ into a favorable position with respect to these polyhedra.

Sections \ref{sec:crossing-intersect} and \ref{sec:length} constitute the heart of the paper. Here, we use the combinatorics of the ideal polyhedra to estimate the number of times that the surface $F$ must intersect the extra link components that we added to construct $L$. Each of these intersections will make a definite contribution to the Euler characteristic of $F$, implying the estimates of Theorems \ref{thm:closed} and \ref{thm:meridional}.

\section{Augmented links and crossing disks}\label{sec:augment}
In the arguments that follow, we will assume that $D(K)$ satisfies the hypotheses of Theorems \ref{thm:closed} and \ref{thm:meridional}. Specifically, $D(K)$ is a connected, prime, twist--reduced diagram with at least $2$ twist regions and $h(D) \geq 6$. By  \cite[Theorem 1.4]{futer-purcell:surgery}, these hypotheses on $D(K)$ imply that $K$ is prime and $S^3 \setminus K$ is irreducible.

The proof of Theorems \ref{thm:closed} and \ref{thm:meridional} relies on the geometric study of \emph{augmented links}. Let us recap the definitions, while pointing the reader to Purcell's survey paper \cite{purcell:IntroAug} for more details.

For every twist region of
$D(K)$, we add an extra link component, called a \emph{crossing
  circle}, that wraps around the two strands of the twist region. The result is a new link $J$. (See
Figure \ref{fig:augment}.)  Now, the manifold $S^3 \setminus J$ is homeomorphic to $S^3 \setminus L$,
where $L$ is obtained by removing all full twists (pairs of crossings) from the twist regions of $J$. This link $J$ is called the \emph{augmented link} corresponding to $D(K)$. By \cite[Theorem 2.4]{futer-purcell:surgery}, both $J$ and $L$ are prime and $S^3\setminus L \cong S^3\setminus J$ is irreducible.

\begin{figure}[ht]
\psfrag{K}{$K$}
\psfrag{J}{$J$}
\psfrag{L}{$L$}
\psfrag{p}{$L'$}
\begin{center}

\includegraphics{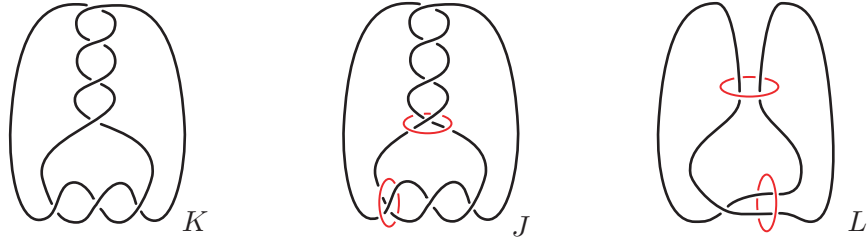}
\caption{An augmented link $L$ is constructed by adding a \emph{crossing
    circle} around each twist region of $D(K)$, then removing full twists. The crossing circles are shown in red. Figure borrowed from \cite{futer-purcell:surgery}.}
\label{fig:augment}
\end{center}
\end{figure}

Every crossing circle $C_i$ bounds a \emph{crossing disk} $D_i$ that is punctured twice by strands of $K$. These twice--punctured disks play a particularly significant role in the hyperbolic geometry of $S^3 \setminus L$.
Note that $S^3 \setminus K$ can be recovered from $S^3 \setminus L$ by $1/{n_i}$ Dehn filling on $C_i$, where $\abs{n_i}$ is the number of full twists that we removed from the corresponding twist region.

A key goal in proving Theorems \ref{thm:closed} and \ref{thm:meridional} is to place the surface $F$ into a particularly nice position with respect to the crossing circles and crossing disks. This will be done in two steps. First, we move $F$ by isotopy through $S^3 \setminus K$ into a position that minimizes the intersections with the crossing disks. Then, in the next section, we drill out the crossing circles and place the remnant surface $F^\circ \subset F$ into normal form with respect to a polyhedral decomposition.

\begin{lemma}\label{lemma:pushout}
Let $F$ be a $c$--incompressible surface in $S^3 \setminus K$, whose boundary (if any) consists of meridians. Move $F$ by isotopy into a position that minimizes the number of components of intersection with the crossing disks. Then every component of intersection between $F$ and a crossing disk $D_i$ is an essential arc in $D_i$ with endpoints in $C_i$.
\end{lemma}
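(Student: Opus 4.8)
The plan is a standard innermost-disk / outermost-arc argument, using in turn the incompressibility of $F$, the irreducibility of $S^3\setminus K$, the $c$--incompressibility of $F$, and --- for the one genuinely new feature --- the fact that each crossing circle $C_i$ is an ordinary curve in the interior of $S^3\setminus K$, not part of $K$. As setup, note that the crossing disks $D_1,\dots,D_k$ encircle disjoint twist regions, so we may take them pairwise disjoint, and after a small isotopy we may assume $\bdy F$ misses every $\bdy D_i$; then each component of $F\cap D_i$ is either a closed curve in the twice--punctured disk $D_i$ or an arc with both endpoints on $C_i$. Assuming $F$ already minimizes $\sum_i\#(F\cap D_i)$, I will show that any component which is not an essential arc yields an isotopy of $F$ through $S^3\setminus K$ --- preserving meridional boundary --- that strictly lowers this count, a contradiction.

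\emph{Closed curves bounding simple disks in $D_i$.} Suppose some $F\cap D_i$ has a closed curve that bounds an unpunctured disk in $D_i$; take an innermost one $c$, bounding a disk $E\subset D_i$ with $\operatorname{int}E\cap F=\emptyset$. Incompressibility of $F$ gives a disk $E'\subset F$ with $\bdy E'=c$; irreducibility of $S^3\setminus K$ makes $E\cup E'$ bound a ball in $S^3\setminus K$; and pushing $F$ across this ball by a small pushoff of $E$, which can be taken disjoint from $D_i\setminus E$ and from the other $D_j$, removes $c$. Similarly, if some $F\cap D_i$ has a closed curve bounding a once--punctured disk in $D_i$, then (the previous case being exhausted) take an innermost such $c$, bounding a $1$--punctured disk $E\subset D_i$ with $\operatorname{int}E\cap F=\emptyset$. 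Since $E\cap F=\bdy E$, $c$--incompressibility forces $E$ to be transversely isotopic rel $\bdy$ to a $1$--punctured disk carried by $F$ --- impossible outright when $F$ is closed, and, when $\bdy F$ consists of meridians, meaning $c$ is boundary-parallel in $F$; in the latter case sliding the corresponding boundary collar of $F$ across the ball cobounded by $E$ and its image in $F$ (a ball meeting $K$ in an unknotted arc, along which $\bdy F$ is slid) removes $c$.

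\emph{Closed curves parallel to $C_i$, and inessential arcs.} Any remaining closed curve of $F\cap D_i$ is parallel to $C_i$, but before handling these we remove the inessential arcs, i.e.\ the arcs of $F\cap D_i$ that cut off an unpunctured bigon from $D_i$ (an arc with endpoints on $C_i$ cuts off such a bigon precisely when it fails to separate the two punctures, so ``essential arc'' means ``separates the two punctures''). Given an inessential arc, take an outermost one $\alpha$, cutting off a bigon $E\subset D_i$ with $\operatorname{int}E\cap F=\emptyset$ and $\bdy E=\alpha\cup\beta$, $\beta\subset C_i$; isotoping the collar of $\alpha$ in $F$ across $E$ and past the arc $\beta\subset C_i$ --- possible since $C_i\subset S^3\setminus K$ --- is a valid isotopy supported near $D_i$, disjoint from the other $D_j$, and removes $\alpha$. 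Once there are no inessential arcs and no closed curves bounding simple disks, take an outermost closed curve $c$ of $F\cap D_i$ parallel to $C_i$, cobounding an annulus $A\subset D_i$ with $C_i$. Then $\operatorname{int}A\cap F=\emptyset$: a closed curve there would be a closer parallel curve or would bound an unpunctured disk, and an arc there --- having endpoints on $C_i$ and being disjoint from $c$ --- would lie in $\overline{A}$ and hence be inessential. Now sliding the collar of $c$ in $F$ along $A$ to $C_i$ and just past $C_i$ to the far side of $D_i$, legitimate precisely because $C_i$ is not on $K$, removes $c$ without disturbing the $D_j$. This exhausts the possibilities, so at the minimum every component of $F\cap D_i$ is an essential arc with endpoints on $C_i$.

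\emph{The main obstacle.} The one step that is not routine is the closed curves parallel to $C_i$: they cannot be removed by compression, since the disk they bound in $D_i$ is twice--punctured rather than a disk, and a naive ``push $F$ across'' would drag $F$ through $K$. The resolution --- and the heart of the lemma --- is to defer this case until the simpler closed curves and all inessential arcs are gone, so that the annulus from $c$ to $C_i$ has empty interior intersection with $F$ and $F$ can instead be slid past $C_i$. Making that bookkeeping airtight, and checking in every case that the ``push'' or ``slide'' neither meets $K$ nor creates new intersections with the other crossing disks, is where the actual work lies.
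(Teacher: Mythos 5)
Your proposal is correct and follows essentially the same route as the paper's proof: innermost/outermost arguments using incompressibility and irreducibility to kill trivial closed curves, $c$--incompressibility (with primeness of $K$) to kill meridian-parallel closed curves, an isotopy of $F$ past the crossing circle $C_i$ to remove $C_i$--parallel closed curves, a small perturbation to exclude arc endpoints on $K$, and an outermost-bigon isotopy to remove inessential arcs. The only difference is sequencing (the paper removes all closed curves first, sweeping the $C_i$--parallel ones past $C_i$ at once, then treats arcs), which is immaterial within the minimality-and-contradiction framework.
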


\begin{proof}
The first step of the proof is to rule out closed curves of intersection. Since $D_i$ is a twice--punctured disk, every closed curve in $D_i$ is either trivial or parallel to one of the boundary components. Isotope $F$ to intersect the union of the $D_i$ minimally.

Since $F$ is incompressible and $S^3 \setminus K$ is irreducible, no curve of intersection can bound a disk in $D_i$ since we could eliminate such a curve of intersection via an isotopy of $F$. Similarly, since $F$ is $c$--incompressible and $K$ is prime, no closed curve of intersection can be parallel to a meridian of $K$. Thus all closed curves of $F \cap D_i$ are parallel to $C_i$. We may then move $F$ by isotopy in $S^3 \setminus K$, past the crossing circle $C_i$, and remove all remaining curves of intersection $F \cap D_i$, contradicting our minimality assumption. See Figure \ref{fig:remove-curves}.

\begin{figure}[h]
\begin{overpic}{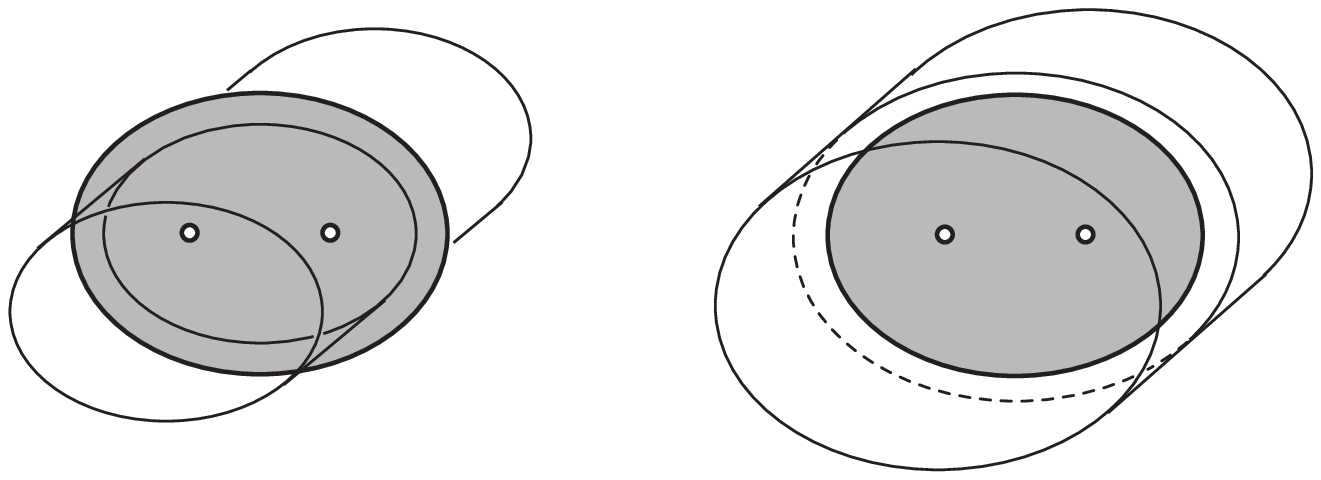}
\put(39,30){$F$}
\put(95,32){$F$}
\put(45,16){$\Rightarrow$}
\put(16,14){$D_i$}
\put(75,16){$D_i$}
\put(7,28){$C_i$}
\end{overpic}
\caption{If $F$ intersects a crossing disk $D_i$ in a closed curve, this closed curve must be parallel to crossing circle $C_i$, and can be removed by isotopy.}
\label{fig:remove-curves}
\end{figure}

Now that we have ruled out closed curves of $F \cap D_i$, all components of intersection must be arcs. An arc with an endpoint on $K$ cannot occur, because $F$ is a meridional surface and after an small perturbation of $F$ we can assume that $F$ is disjoint from the points of intersection between $K$ and the crossing disks. Therefore, every component of $F \cap D_i$ is an arc from $C_i$ to $C_i$. If any of these arcs are inessential in $D_i$, then an outermost such arc $\alpha$ can be removed via an isotopy of $F$ supported in a neighborhood of the subdisk of $D_i$ cobounded by $\alpha$ and an arc in $C_i$. Thus, every component of intersection between $F$ and a crossing disk $D_i$ is an essential arc in $D_i$ with endpoints in $C_i$.
\end{proof}

\begin{corollary}\label{cor:min-ci}
Suppose that $F$ is moved by isotopy into a position that minimizes the number of components of $F \cap \bigcup_i D_i$, as in Lemma \ref{lemma:pushout}. This position also minimizes the number of points of intersection between $F$ and the crossing circles $C_i$.
\end{corollary}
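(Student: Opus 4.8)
The plan is to show that the minimal number $n$ of components of $F \cap \bigcup_i D_i$ over all positions of $F$, and the minimal number of points of $F \cap \bigcup_i C_i$ over all positions, are both equal to $2n$, and that the latter minimum is attained precisely at the positions produced by Lemma \ref{lemma:pushout}. One direction is immediate from that lemma: if $F$ is placed so as to minimize the number of components of $F \cap \bigcup_i D_i$, then every component of $F \cap D_i$ is an essential arc with both endpoints on $C_i$, and since distinct arcs have disjoint endpoints we get $\abs{F \cap C_i} = 2\cdot(\#\text{arcs in }D_i)$, hence $\abs{F \cap \bigcup_i C_i} = 2n$. In particular the minimum of $\abs{F \cap \bigcup_i C_i}$ over all positions is at most $2n$.

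For the reverse inequality, I would start from a position $F'$ that minimizes $\abs{F' \cap \bigcup_i C_i}$ and re-run the reduction procedure from the proof of Lemma \ref{lemma:pushout}, checking that no step increases the number of intersections with the crossing circles. First, $F' \cap D_i$ has no inessential arc: an outermost such arc is removed by an isotopy supported near a subdisk of $D_i$ meeting $C_i$ in a single sub-arc, and this isotopy deletes the two endpoints of the arc from $F' \cap C_i$, contradicting minimality. Next one removes all closed curves of $F' \cap D_i$. Trivial curves (bounding unpunctured disks in $D_i$) and meridian-parallel curves are removed, using irreducibility of $S^3 \setminus K$, $c$-incompressibility, and primeness of $K$ as in Lemma \ref{lemma:pushout}, by isotoping $F'$ across a ball bounded by an innermost disk of $F'$ and a subdisk of $D_i$; at a position minimizing $\abs{F' \cap \bigcup_i C_i}$ this ball may be taken disjoint from every $C_j$, since otherwise the isotopy would strictly lower the count. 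Finally, a closed curve of $F' \cap D_i$ parallel to $C_i$ cobounds an unpunctured annulus $A \subset D_i$ with $C_i$; as $A$ lies in $D_i$, which is disjoint from all $C_j$ and $D_j$ with $j \neq i$, the isotopy pushing $F'$ past $C_i$ along a neighborhood of $A$ is supported away from all other crossing data, and it merely slides a collar of the curve — disjoint from $C_i$ both before and after — over the edge $C_i$, so $\abs{F' \cap \bigcup_i C_i}$ does not increase. After these moves $F' \cap \bigcup_i D_i$ consists only of essential arcs, so its number of components equals $\tfrac12\abs{F' \cap \bigcup_i C_i}$, which is at least $n$; hence $\abs{F' \cap \bigcup_i C_i} \ge 2n$. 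Combined with the first paragraph, the minimum of $\abs{F \cap \bigcup_i C_i}$ equals $2n$ and is attained at every position minimizing the number of components of $F \cap \bigcup_i D_i$, which is the claim.

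The part I expect to demand the most care is verifying that each reduction move keeps the count of intersections with the crossing circles from growing — above all the move pushing $F$ past a crossing circle, where one must confirm both that the supporting isotopy stays in a neighborhood of $D_i$ disjoint from the other crossing disks and circles, and that pushing the relevant collar ``over the edge'' $C_i$ creates no new intersection points. It is also worth checking carefully that, once $\abs{F \cap \bigcup_i C_i}$ is minimized, the innermost disk on $F$ used to surger off a closed curve of $F \cap D_i$ can be taken disjoint from all of the crossing circles; this is exactly where it matters that we minimized the total count over all crossing disks, rather than the count with a single $D_i$.
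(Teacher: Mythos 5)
Your proposal is correct and follows essentially the same route as the paper: both start from a position minimizing intersections with the crossing circles, clean up the closed curves (and inessential arcs) of $F\cap\bigcup_i D_i$ by isotopies that do not increase that intersection count, and then use the resulting two-to-one correspondence between circle intersection points and arc components to compare with the arc count at the disk-minimizing position. Your version just spells out in more detail why each cleanup move keeps the crossing-circle count from growing, which the paper asserts more briefly.
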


\begin{proof}
By Lemma \ref{lemma:pushout}, every component of $F \cap D_i$ is an arc from $C_i$ back to $C_i$. This arc has two endpoints on $C_i$. Suppose $F^*$ is an isotopic copy of $F$ that minimizes the number of points of intersection between $F$ and the crossing circles $C_i$. As described in the proof of Lemma \ref{lemma:pushout}, c-incompressibility of $F^*$ and primeness of $K$ implies we can eliminate loops of intersection between $F^*$ and any $D_i$ that bound disks or 1-punctured disks in $D_i$ via an isotopy the fixes $F^* \cap \cup_i C_i$. Similarly, we can remove loops of intersection between $F^*$ and any $D_i$ that are isotopic to $C_i$ in $D_i$ via an isotopy of $F^*$ that fixes $F^* \cap \cup_i C_i$. Hence, we can assume that the points of intersection between $F^*$ and the $C_i$ are in two-to-one correspondence with the components of intersection of $F^*$ and the $D_i$. Thus, minimizing the number of components of $F \cap \bigcup_i D_i$ also minimizes the number of points of intersection between $F$ and the crossing circles $C_i$.
\end{proof}

Our next step is to drill out the crossing circles $C_i$. Suppose, following Corollary \ref{cor:min-ci}, that $F$ intersects $\bigcup_i D_i$, and thus $\bigcup_i C_i$, minimally. Let $F^\circ = F \setminus \bigcup_i C_i$ be the remnant of $F$ after removing the crossing circles.

\begin{lemma}\label{lemma:essential-remnant}
Let $L$ be the augmented link, as in Figure \ref{fig:augment}. Then, after isotoping $F$ to minimize the number of components of $F \cap \bigcup_i D_i$, $F^\circ = F \setminus \bigcup_i C_i$ is an essential $c$--incompressible surface in $S^3 \setminus L$.
\end{lemma}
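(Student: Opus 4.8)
We must check that $F^\circ$ is incompressible, meridionally incompressible (no compression by a once--punctured disk), $\partial$--incompressible, and neither boundary--parallel nor a trivial sphere or disk. Throughout I would use the homeomorphism $S^3\setminus L\cong S^3\setminus(K\cup\bigcup_i C_i)$ and argue in the latter model. Drilling $C_i$ replaces each point of $F\cap C_i$ by a small meridian circle of $C_i$, so $\partial F^\circ$ consists of $\partial F$ together with two meridians of $C_i$ for each arc of $F\cap D_i$; since removing disks from a connected surface leaves it connected, $F^\circ$ is still connected. Each crossing disk $D_i$ becomes a properly embedded twice--punctured disk $D_i^\circ\subset S^3\setminus L$ meeting $F^\circ$ exactly in the essential arcs supplied by Lemma \ref{lemma:pushout} (now with endpoints on $\partial D_i^\circ\subset\partial N(C_i)$). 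I will also use that re--inserting the crossing circles, that is, Dehn filling the $C_i$ so as to recover $S^3\setminus K$, returns $F^\circ$ to $F$ and $D_i^\circ$ to $D_i$, and that any disk in $S^3\setminus L$ is disjoint from the solid tori glued in by this filling, so that a disk in $S^3\setminus L$ which is additionally disjoint from $\bigcup_i D_i^\circ$ is carried to a disk of the same type (honest, meridional, or $\partial$--compressing) in $S^3\setminus K$.

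Suppose $E\subset S^3\setminus L$ is a compressing, meridional, or $\partial$--compressing disk for $F^\circ$. The plan is to isotope $E$ to minimize $\abs{E\cap\bigcup_i D_i^\circ}$ and show this minimum is $0$. Closed curves of $E\cap D_i^\circ$ are removed exactly as in the proof of Lemma \ref{lemma:pushout}: an innermost such curve on $E$ bounds a subdisk of $E$ which, capped off inside the twice--punctured disk $D_i^\circ$, gives a $2$--sphere meeting $K$ in $0$, $1$, or $2$ points, and irreducibility of $S^3\setminus L$, the parity constraint that $K$ meets a $2$--sphere an even number of times, and primeness of $K$ then allow one to isotope the curve away, against minimality. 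For the arcs, take an arc $\alpha$ of $E\cap\bigcup_i D_i^\circ$ outermost on $E$; it cuts off a disk $E'\subset E$ with $\partial E'=\alpha\cup\beta$, $\beta\subset\partial E$, and interior disjoint from $F^\circ$ and from every $D_j^\circ$. According to how $\alpha$ lies in $D_i^\circ$ --- which subarc of $F^\circ\cap D_i^\circ$ (or of $\partial D_i^\circ$) it is parallel to, and whether the bigon it cuts off there contains a puncture --- the disk $E'$ should in every case either produce an isotopy of $F$ lowering $\abs{F\cap\bigcup_i D_i}$, which is impossible since $F$ is in minimal position by Lemma \ref{lemma:pushout} and Corollary \ref{cor:min-ci}, or produce an isotopy of $E$ lowering $\abs{E\cap\bigcup_i D_i^\circ}$, contradicting minimality of $E$; the punctured sub--cases again invoke primeness of $K$. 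Hence one may take $E$ disjoint from all the $D_i^\circ$.

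Now refill the $C_i$. The disk $E$ becomes a disk in $S^3\setminus K$ with $\partial E\subset F$ and interior disjoint from $F$, of the same type as before. If $\partial E$ remains essential on $F$ this contradicts the hypothesis that $F$ is essential and $c$--incompressible. Otherwise $\partial E$, essential on $F^\circ$, becomes inessential on $F$, hence bounds a disk $d'\subset F$ necessarily containing some of the small disks $F$ recovers on refilling; capping $E$ off with $d'$ yields a $2$--sphere $S$ disjoint from $K$, whose intersection with each crossing circle is even and with some crossing circle is nonempty, and $S$ bounds a ball $B$ disjoint from $K$ by irreducibility. Examining how the crossing circles and the crossing disks $D_i$ meet $B$ --- using that $K$, hence every puncture of the $D_i$, avoids $B$, and invoking minimality of $F\cap\bigcup_i D_i$ once more --- one is forced to conclude that $\partial E$ bounds a disk in $F^\circ$ after all, a contradiction. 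This establishes incompressibility, meridional ($c$--)incompressibility, and, by the standard tubing argument for meridional surfaces, $\partial$--incompressibility of $F^\circ$. Finally, if $F^\circ$ were boundary--parallel in $S^3\setminus L$, or a trivial sphere or disk, it would be a meridional annulus (or sphere or disk) parallel into some $\partial N(C_i)$ or $\partial N(K)$, and refilling would present $F$ as compressible, meridionally compressible, or a $2$--sphere bounding a ball, contradicting that $F$ is essential. Hence $F^\circ$ is an essential $c$--incompressible surface in $S^3\setminus L$.

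The step I expect to be the main obstacle is the outermost--arc analysis of the second paragraph, together with the closely related ball argument of the third: one must enumerate all positions of an outermost arc in a twice--punctured disk relative to the arcs $F^\circ\cap D_i^\circ$ --- essential versus inessential, the cut--off bigon punctured or not, endpoints on one arc or on two --- and, symmetrically, control how the crossing circles and crossing disks can run through the ball $B$, checking in every case that some intersection complexity genuinely drops or that an honest or meridional compression of $F$ appears, with no configuration escaping. The remainder is routine bookkeeping of the new boundary circles of $F^\circ$, for which primeness of $K$ and irreducibility of the complement are the only tools needed.
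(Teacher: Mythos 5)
There is a genuine gap: the two steps you yourself flag as the ``main obstacle'' --- the outermost-arc analysis making $E$ disjoint from the $D_i^\circ$, and the examination of how the crossing circles and crossing disks meet the ball $B$ --- are precisely where the proof is supposed to happen, and they are not carried out. Moreover, the second of these aims at the wrong target: once $\partial E$ bounds a disk $d'\subset F$ that meets the crossing circles, you should not expect to show that ``$\partial E$ bounds a disk in $F^\circ$ after all'' (it cannot, since $d'$ meets $\bigcup_i C_i$); the contradiction comes from minimality. The sphere (or twice-punctured sphere) $E\cup d'$ bounds a ball in $S^3$ disjoint from $K$, or, in the once-punctured case, a ball meeting $K$ in an unknotted arc by primeness of $K$; isotoping $F$ across this ball, replacing $d'$ by $E$, is an isotopy in $S^3\setminus K$ --- the crossing circles are not obstructions, because they are not part of $K$ --- and afterwards the number of intersections of $F$ with $\bigcup_i C_i$ strictly drops, since $E\subset S^3\setminus L$ is disjoint from the crossing circles while $d'$ was not. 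This contradicts Corollary \ref{cor:min-ci}. No analysis of how the $C_i$ or $D_i$ run through $B$ is needed.

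Once this is in place, the entire first half of your argument is superfluous: a compressing or once-punctured compressing disk $E$ for $F^\circ$ in $S^3\setminus L$ is already a disk of the same type in $S^3\setminus K$ with boundary on $F$, so there is no need to isotope $E$ off the twice-punctured disks $D_i^\circ$ or to track any refilling. The paper's proof is exactly the short argument above: if $\partial E$ is essential in $F$, this contradicts the incompressibility or c-incompressibility of $F$; otherwise $\partial E$ bounds a (possibly once-punctured) disk in $F$ that necessarily meets $\bigcup_i C_i$, and sweeping across the ball contradicts the minimality of the number of components of $F\cap\bigcup_i D_i$. Your treatment of boundary-parallelism is roughly in line with the paper's (boundary compressibility in a manifold with torus boundary yields either compressibility or a boundary-parallel annulus, and in every case $F$ would fail to be essential in $S^3\setminus K$), but as written the incompressibility and c-incompressibility claims are not proved.
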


\begin{proof}
Suppose that $F^\circ$ is compressible in $S^3 \setminus L$. Let $\gamma$ be an essential curve in $F^\circ$ that bounds a compressing disk $D$ in $S^3 \setminus L$. If $\gamma$ is essential in $F$, then we contradict the incompressibility of $F$. Hence $\gamma$ bounds a disk $E$ in $F$ that is disjoint from $K$ but meets $\bigcup_i C_i$ in a nontrivial number of points. Since $E\cup D$ is a 2-sphere bounding a 3-ball, there is an isotopy of $F$ taking $E$ to $D$ that strictly reduces the number of components of $F \cap \bigcup_i D_i$, a contradiction.

Suppose that $F^\circ$ is incompressible and c-compressible in $S^3 \setminus L$. Let $\gamma$ be an essential curve in $F^\circ$ that bounds a 1-punctured disk $D$ in $S^3 \setminus L$. If $\gamma$ is essential in $F$, then we contradict the c-incompressibility of $F$. Hence, $\gamma$ bounds a punctured disk $E$ in $F$ that meets $\bigcup_i C_i$ in at least two points and meets $K$ in at most one point. If $E$ meets $K$ in exactly one point, then, since $K$ is prime, $E\cup D$ is a 2-sphere bounding a 3-ball that meets $K$ in a single unknotted arc. Thus, there is an isotopy of $F$ transverse to $K$ taking $E$ to $D$ that strictly reduces the number of components of $F \cap \bigcup_i D_i$, a contradiction. If $E$ is disjoint from $K$, then the isotopy of $F$ taking $E$ to $D$ is supported in a 3-ball disjoint from $K$ and again strictly reduces the number of components of $F \cap \bigcup_i D_i$, a contradiction.

Suppose that $F^\circ$ is boundary parallel in the exterior of $L$. Then $F^\circ$ is isotopic to the boundary of a regular neighborhood of a component of $L$ or $F^\circ$ is boundary compressible. If $F^\circ$ is boundary compressible, then, since the exterior of $L$ has all torus boundary components, $F^\circ$ is compressible in $S^3 \setminus L$, which is a contradiction as previously demonstrated, or $F^\circ$ is a boundary parallel annulus. If $F^\circ$ is a boundary parallel annulus, then $F$ is not essential in $S^3 \setminus K$, a contradiction.  If $F^\circ$ is isotopic to the boundary of a regular neighborhood of a component of $L$, then $F$ is not essential in $S^3 \setminus K$, a contradiction.
\end{proof}

\section{The polyhedral decomposition}\label{sec:polyhedra}
In this section, we consider the intersection between the punctured surface $F^\circ$ and a certain polyhedral decomposition of of $S^3 \setminus L$. For the purposes of this paper, a \emph{right-angled ideal polyhedron} is a convex polyhedron in hyperbolic $3$--space, all of whose vertices lie on the sphere at infinity, and all of whose dihedral angles are $\pi/2$. A \emph{right-angled polyhedral decomposition} of a $3$--manifold $M$ is an expression of $M$ as the union of finitely many right-angled ideal polyhedra, glued by isometries along their faces. Note that a right-angled polyhedral decomposition endows $M$ with a complete hyperbolic metric.

In our setting, where $M$ is the augmented link complement $S^3 \setminus L$, there is a well-studied way to decompose $M$ into two identical right-angled ideal polyhedra, first considered by Adams \cite{adams:auglink} and later popularized by Agol and Thurston \cite[Appendix]{lackenby:surgery}. Purcell's survey article  \cite{purcell:IntroAug} describes the polyhedral decomposition in great detail. For our purposes, the salient features are summarized in the following theorem, and illustrated in Figure  \ref{fig:polyhedraldecom}.

\begin{theorem}\label{thm:aug-geometry}
Let $D(K)$ be a prime, twist--reduced diagram of a link $K$ with at least $2$ twist regions. Let $L$ be the augmented link constructed from $D(K)$. Then the augmented link complement $S^3 \setminus L$ is hyperbolic, and there is a decomposition of $S^3\setminus L$ into two identical totally geodesic polyhedra $P$ and $P'$.
In addition, these polyhedra have the following properties.

\begin{enumerate}
\item The faces of $P$ and $P'$ can be checkerboard colored, with shaded faces all triangles
corresponding to portions of crossing disks, and white faces corresponding to regions into which $L$ cuts
the projection plane.

\item All ideal vertices are $4$--valent.

\item The dihedral angle at each edge of $P$ and $P'$ is $\frac{\pi}{2}$.
\end{enumerate}
\end{theorem}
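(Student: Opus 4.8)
The statement is a recap of the well-known polyhedral decomposition of an augmented link complement, due to Adams \cite{adams:auglink}, treated in the appendix to \cite{lackenby:surgery} and in Purcell's survey \cite{purcell:IntroAug}; the plan is to recall that construction and record the three listed properties. First, isotope $L$ into its standard position, in which each crossing circle $C_i$ is a round circle perpendicular to the projection plane $\Pi$, the remaining components of $L$ lie flat in $\Pi$, and each crossing disk $D_i$ is a flat disk perpendicular to $\Pi$, bounded by $C_i$ and punctured twice by $L$. Now cut $S^3 \setminus L$ along $\Pi$ and along all the crossing disks $D_i$. Since $\Pi$ is a plane of reflective symmetry for this configuration, the result is a pair of identical pieces $P$ and $P'$, interchanged by the reflection.

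Next I would read off the combinatorial structure of $P$ and verify properties (1) and (2). The faces fall into two classes: the \emph{white} faces are the closures of the complementary regions of $L$ in $\Pi$, and the \emph{shaded} faces are the two halves into which $\Pi$ slices each crossing disk. Because $D_i$ is a twice--punctured disk whose two punctures lie on $\Pi$, the plane $\Pi$ divides it into two ideal triangles, each having one ideal vertex on the cusp of $C_i$ and one on each of the two strands of $L$ threading $D_i$; this gives (1). The ideal vertices of $P$ correspond to arcs of $L \cap \Pi$ together with the points of $C_i \cap \Pi$, and a local picture at each such vertex shows that exactly four faces meet there, alternating white--shaded--white--shaded, which gives (2). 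At the same time one records the face identifications: white faces of $P$ glue to white faces of $P'$ by the reflection, and the two shaded triangles coming from a single $D_i$ glue to one another (with a shift reflecting the clasp of that crossing circle).

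To upgrade this to the geometric statement, I would invoke Andreev's theorem for ideal polyhedra (equivalently, exhibit the associated orthogonal circle pattern, as in \cite{lackenby:surgery, purcell:IntroAug}). The hypotheses that $D(K)$ is prime, twist--reduced, and has at least $2$ twist regions are precisely what is needed to ensure that the $1$--skeleton of $P$ is a simple, $3$--connected planar graph satisfying the hypotheses of Andreev's theorem, so that $P$ is realized as a hyperbolic ideal polyhedron with every dihedral angle equal to $\pi/2$, unique up to isometry, and likewise for $P'$. Finally, one checks that the face pairings are isometries of this right-angled structure and that each edge cycle of the glued complex closes up with total dihedral angle $2\pi$: by the $4$--valence and the mirror symmetry, every edge of $S^3 \setminus L$ is covered by four polyhedral edges, two from $P$ and two from $P'$, each contributing $\pi/2$. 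Hence the gluing endows $S^3 \setminus L$ with a complete hyperbolic metric; in particular $S^3 \setminus L$ is hyperbolic, $P$ and $P'$ sit in it as totally geodesic polyhedra, and property (3) holds.

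The substantive part of the argument, and the only place where the diagrammatic hypotheses are genuinely used, is verifying that the abstract polyhedron extracted from the diagram meets the combinatorial conditions of Andreev's theorem, and that the face-pairing maps respect the resulting right-angled structure, so that the quotient is complete and is exactly $S^3 \setminus L$ rather than some other Dehn filling. Since all of this is carried out in detail in \cite{adams:auglink}, \cite{lackenby:surgery}, and \cite{purcell:IntroAug}, I would present only the outline of the construction and refer the reader to those sources for the full verification.
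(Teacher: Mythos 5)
Your proposal is correct and takes essentially the same route as the paper: the paper's proof simply quotes Adams for hyperbolicity and Purcell's survey (Proposition 2.2) for the polyhedral decomposition and its properties, which is exactly the Adams/Agol--Thurston construction you outline (cut along the projection plane and the crossing disks, read off the checkerboard-colored, $4$--valent combinatorics, and realize the polyhedra as right-angled via the orthogonal circle pattern or Andreev's theorem). Since you likewise defer the detailed verification to those same sources, the two arguments coincide in substance; your sketch merely spells out more of what the cited references contain.
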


\begin{proof}
The hyperbolicity of $S^3 \setminus L$ is a theorem of Adams \cite{adams:auglink}; compare \cite[Theorem 2.2]{futer-purcell:surgery}. The remaining assertions are proved in \cite[Proposition 2.2]{purcell:IntroAug}.
\end{proof}

\begin{figure}
  \begin{center}
    \includegraphics[width=1.4in]{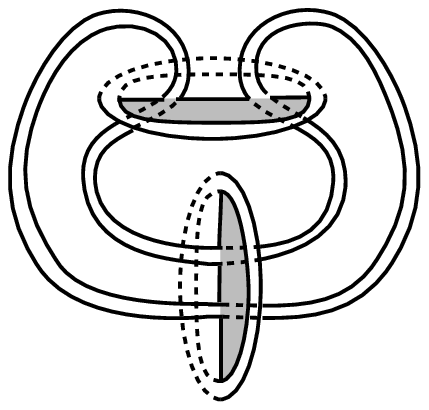}
    \hspace{.1in}
    \includegraphics[width=1.4in]{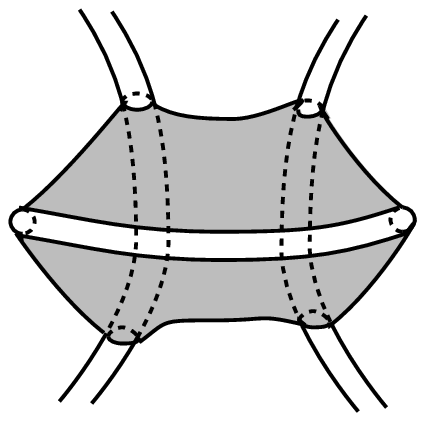}
    \hspace{.1in}
    \includegraphics[width=1.5in]{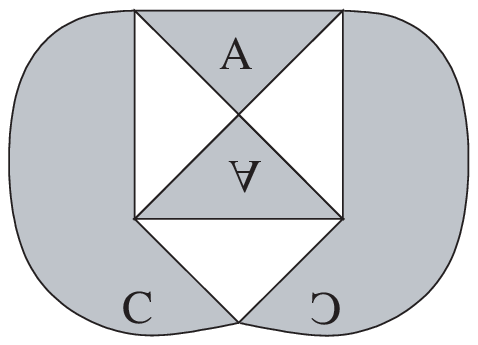}
    \end{center}
    \caption{Decomposing $S^3 \setminus L$ into ideal polyhedra.
    First, slice along the projection plane, then split remaining
    halves of two--punctured disks.  This produces the polygon on the right. Figured borrowed from \cite{futer-purcell:surgery}.}
\label{fig:polyhedraldecom}
\end{figure}

Our goal is to place $F^\circ$ in normal form with respect to this polyhedral decomposition. Our convention is that the  ideal vertices of the polyhedra are truncated to form \emph{boundary faces} that tile the boundary tori of $S^3 \setminus L$.  Then, $\bdy F$ intersects the boundary faces in a union of arcs.

\begin{define}\label{normal-def}
Let $P$ be a truncated ideal polyhedron.  An embedded disk $D \subset P$
 is called {\it normal} if its boundary curve $\gamma = \bdy D$ satisfies the following conditions:
\begin{enumerate}
\item\label{i:transverse} $\gamma$ is transverse to the edges of $P$,
\item\label{i:loop} $\gamma$ doesn't lie entirely in a face of $P$,
\item\label{i:bent-arc} no arc of $\gamma$ in a face of $P$ has endpoints on the same edge,
or on a boundary face and an adjacent edge,
\item\label{i:edge-compression} $\gamma$ intersects each edge at most once, and
\item\label{i:bdy-compression} $\gamma$ intersects each boundary face at most once.
\end{enumerate}
If $M$ is a $3$--manifold subdivided into ideal polyhedra, a surface $S$ is called \emph{normal} if its intersection with each polyhedron is a disjoint union of normal disks.
\end{define}

It is a well-known fact, originally due to Haken  \cite{haken:normal}, that every essential surface in an irreducible $3$--manifold can be isotoped into normal form. However, in our context, we would like to make $F^\circ$ normal while preserving the conclusion of Lemma \ref{lemma:pushout}. This requires carefully managing the complexity of the surface.

\begin{define}\label{def:surf-complexity}
Let $M$ be a $3$--manifold with a prescribed polyhedral decomposition. Let $S \subset M$ be a properly embedded surface, transverse to the edges and faces of the polyhedra. Order the faces of the polyhedral decomposition: $f_1, \ldots, f_n$. Then the \emph{complexity} of $S$ is the ordered $n$--tuple
$$c(S) \: = \: \big( \#(S \cap f_1), \ldots, \#(S \cap f_n) \big).$$
Here, $\#$ denotes the number of components. Given two surfaces $S$ and $S'$, we say that $c(S) \leq c(S')$ if the inequality holds in each coordinate. We say that $c(S) < c(S')$ if $c(S) \leq c(S')$ and there is a strict inequality in at least one coordinate.
\end{define}

\begin{lemma}\label{lemma:normalize}
Let $M$ be an irreducible $3$--manifold with incompressible boundary, and with a prescribed polyhedral decomposition. Let $S \subset M$ be a properly embedded essential surface, transverse to the edges and faces of the polyhedra. Then $S$ can be isotoped to a normal surface by a sequence of moves that monotonically reduces the complexity $c(S)$.
\end{lemma}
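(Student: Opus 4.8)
The plan is to run a monotone complexity-reduction argument, essentially Haken's normalization but using only elementary moves that never increase any coordinate of $c(S)$. Since each $\#(S\cap f_i)$ is a non-negative integer, a sequence of such moves strictly decreases $\sum_i\#(S\cap f_i)$ and therefore terminates; so I would reduce everything to the claim that \emph{if $S$ is transverse to the $2$--skeleton but not normal, then there is an isotopy of $S$ — supported either across a ball in $M$ or near a disk lying in a single face — under which no coordinate of $c(S)$ increases and at least one strictly decreases.} Iterating until no such move is available then yields a normal surface, and by construction the sequence of complexities is monotone decreasing in the sense of Definition \ref{def:surf-complexity}.

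First I would dispose of closed curves of $S$ lying on faces. If some face $f$ meets $S$ in a simple closed curve, I take one, $\gamma$, innermost in $f$, bounding a disk $E\subset f$ with $\operatorname{int}E\cap S=\emptyset$. If $\gamma$ were essential in $S$ then $E$ would be a compressing disk, contradicting that $S$ is essential; so $\gamma$ bounds a disk $E'\subset S$, and by irreducibility of $M$ the sphere $E\cup E'$ bounds a ball, across which I isotope $S$, sliding $E'$ onto $E$ and then just off $f$. Since $\gamma$ is innermost and $E$ lies in the interior of $f$, a neighborhood of $E$ meets no face but $f$, so this move deletes $\gamma$ from $S\cap f$ (along with any curves of $S\cap f$ inside $E'$) and deletes every intersection of $E'$ with the other faces while creating none — so $c(S)$ strictly decreases. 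After finitely many such moves $S$ meets every face in arcs; in particular no component of $S\cap P$ lies in the interior of a polyhedron $P$, since such a component would be a closed component of $S$, which is ruled out by essentiality.

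Next I would fix a polyhedron $P$ and a component $\Sigma$ of $S\cap P$ that is not a normal disk, and manufacture a face $f$, an arc $\alpha$ of $S\cap f$, and an arc $\beta\subset\partial f$ lying either in a single edge of $f$ or in a single boundary face together with an adjacent edge, such that $\alpha\cup\beta$ bounds a disk $\Delta\subset f$ with $\operatorname{int}\Delta$ disjoint from $S$; the move is then to push $S$ across $\Delta$ and through the edge (or boundary face). The disk $\Delta$ comes from the standard innermost/outermost arguments, in cases. If $\Sigma$ is compressible in $P$, the compressing curve is either essential in $S$ (contradicting incompressibility) or inessential in $S$, in which case capping it off with a subdisk of $S$ and invoking irreducibility gives a complexity-reducing ball move exactly as above; so I may assume every $\Sigma$ is incompressible in $P$. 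Because $\pi_1 P$ is trivial, an incompressible $\Sigma$ that is not a disk is $\partial$-compressible in $P$; an outermost $\partial$-compressing arc on the sphere $\partial P$ can be taken to lie in a single face or boundary face, a boundary-face arc is either a $\partial$-compression of $S$ in $M$ (contradicting essentiality of $S$, using that $\partial M$ is incompressible) or, being inessential on $S$, another ball move, and an ordinary-face arc is precisely the disk $\Delta$. Finally, if $\Sigma$ is a disk but $\partial\Sigma$ violates one of the normality conditions \eqref{i:loop}--\eqref{i:bdy-compression}, then $\partial\Sigma$ is a non-normal simple closed curve on the cell-decomposed sphere $\partial P$, and an Euler characteristic / innermost argument on $\partial P$ produces a bigon between $\partial\Sigma$ and an edge or a boundary face, which is the disk $\Delta$. (Condition \eqref{i:loop} is the degenerate case $\Sigma\subset f$, which after the previous paragraph forces $\Sigma$ to be a boundary-parallel disk in $f$, removable by a ball move.)

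The hard part will be the bookkeeping that each push-across move is monotone on \emph{every} face-coordinate, not merely on the total $\sum_i\#(S\cap f_i)$. The move is supported in a regular neighborhood $B$ of $\Delta$, a ball meeting the $2$--skeleton only in part of $f$, in part of the edge or boundary face containing $\beta$, and in the remaining faces incident to that cell; it replaces the disk $S\cap B$ by a slight pushoff of $\Delta$ off the face $f$. One checks at once that $\alpha$ disappears from $S\cap f$; for any other face $f'$, the two endpoints of $\alpha$ are points of $S$ on the edge (or boundary face) containing $\beta$, and pushing them through that cell merges or deletes the arcs of $S\cap f'$ incident to them, never creating new ones, so $\#(S\cap f')$ does not go up. Carrying this verification through all the cases above — along with the routine observations that every move keeps $S$ embedded and transverse to the $2$--skeleton — is the only real work; once it is done, the strictly decreasing sequence $\sum_i\#(S\cap f_i)$ terminates at a normal surface, proving the lemma.
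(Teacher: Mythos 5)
Your overall strategy (monotone complexity reduction via local moves, terminating because $\sum_i\#(S\cap f_i)$ is a non-negative integer) is the same as the paper's, and your treatment of closed curves in faces and of arcs violating condition \eqref{i:bent-arc} matches the paper's. The gap is in your claim that every remaining non-normal configuration can be repaired by a bigon $\Delta$ lying in a \emph{single} face, bounded by an arc $\alpha$ of $S\cap f$ and an arc $\beta$ in one edge (or one boundary face plus an adjacent edge). That combinatorial claim is false for violations of conditions \eqref{i:edge-compression} and \eqref{i:bdy-compression}. A curve can cross an edge twice while every one of its arcs in every face is a normal arc, so that no single-face bigon of your type exists anywhere on $\partial P$: the standard example is the octagonal curve on the boundary of a tetrahedron, which meets two edges twice each yet has only normal arcs in each face (each arc cuts off a corner, and the region it cuts off is bounded by the arc, parts of \emph{two} edges and a boundary face, not by an arc $\beta$ of your allowed form). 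Analogous curves exist in the polyhedra at hand, since the white faces can have many sides; the same phenomenon occurs for a curve meeting a boundary face twice with the two intersections far apart. Your ``Euler characteristic / innermost argument on $\partial P$'' therefore does not produce the disk $\Delta$ in these cases, and your bookkeeping paragraph, which only analyzes the single-face push, never addresses the move that is actually needed.

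What is needed --- and what the paper does --- is an isotopy across a disk $D\subset\partial P$ cobounded by a subarc of the curve and a subarc of an edge, where $D$ may pass through \emph{several} faces: one first clears $S$ off a copy of $D$ pushed slightly into $P$, observes that $D$ is then a boundary-compressing disk for the relevant disk of $S\cap P$, and checks face by face that sliding $S$ past the edge does not increase any coordinate of $c(S)$ (it strictly decreases the count in the face containing a neighborhood of $\partial\gamma$). For condition \eqref{i:bdy-compression} the analogous disk has its outer arc in a boundary face, and one must invoke boundary-incompressibility of $S$ (plus incompressibility of $\partial M$ and irreducibility) to cap it with a disk $D'\subset S$ and isotope through a boundary-parallel ball; in your outline this use of essentiality appears only in the non-disk branch and is silently dropped in the disk branch, where it is in fact required. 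A secondary, related soft spot is your assertion that an outermost $\partial$-compressing arc for a non-disk component can be taken to lie in a single face or boundary face; like the bigon claim, such an arc in general crosses several faces, so the monotonicity of that move also needs the multi-face analysis rather than the single-face one you verify.
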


This argument is adapted from Futer and Gu\'eritaud \cite[Theorem 2.8]{fg:arborescent}, and the figures are drawn from that paper.

\begin{proof}
We need to ensure that $S$ satisfies the conditions of Definition \ref{normal-def}. By hypothesis, $S$ is transverse to the polyhedra. This transversality implies that for every polyhedron $P$, each component of $S \cap \bdy P$ is a simple closed curve, and gives condition \eqref{i:transverse}. Additionally, since $S$ is incompressible, we can assume that we have isotoped $S$ to meet each polyhedron in a collection of properly embedded disks.

Now, whenever some component of $S \cap \bdy P$ violates one of the conditions \eqref{i:loop}--\eqref{i:bdy-compression}, we will describe a move that reduces the complexity $c(S)$. That is, for each face $\sigma$ of the polyhedra, the intersection number $\#(S \cap \sigma)$ will either remain constant or decrease, with a strict decrease for at least one face.

Suppose that $\gamma$ is a closed curve, violating $\eqref{i:loop}$. Without loss of generality, we may assume that $\gamma$ is innermost on the face $\sigma$. Then $\gamma$ bounds a disk $D \subset \sigma$, whose interior is disjoint from $S$. But since $S$ is incompressible, $\gamma$ also bounds a disk $D' \subset S$. Furthermore, since $M$ is irreducible, the sphere $D \cup_\gamma D'$ must bound a ball. Thus, we may isotope $S$ through this ball, moving $D'$ past $D$. This isotopy removes the curve $\gamma$ from the intersection between $S$ and $\sigma$. In addition, the isotopy will remove the intersections between $D'$ and any other faces of $P$.

Next, suppose that $\gamma$ runs from an edge $e$ back to $e$, violating the first half of condition \eqref{i:bent-arc}.
Then $\gamma$ and a sub arc of $e$ co-bound a disk $D \subset \sigma$, and we can assume $\gamma$ is innermost (i.e. $S$ does not meet $D$ again). We can use this disk $D$ to guide an isotopy of $S$ past the edge $e$, as in the left panel of Figure \ref{fig:normalize}. This isotopy removes $\gamma$ from the intersection between $S$ and $\sigma$.
Some intersection components between $S$ and the interiors other faces adjacent to $e$ will also merge. Hence, $\#(S \cap \sigma)$ stays constant or decreases for each face.

Suppose that an arc $\gamma$ runs from a boundary face to an adjacent interior edge in a face $\sigma$, violating the second half of condition \eqref{i:bent-arc}.
Then $\gamma$ has endpoints in adjacent edges of $\partial \sigma$, and we may assume without loss of generality that it is outermost  in $\sigma$. Thus $\gamma$ once again cuts off a disk $D$ from $\sigma$. By isotoping $S$ along this disk, as in the right panel of Figure \ref{fig:normalize}, we remove $\gamma$ from $S\cap \sigma$ and alter the intersection of $S$ with any other face by an isotopy of arcs in that face.

\begin{figure}
\psfrag{g}{$\gamma$}
\psfrag{e}{$e$}
\psfrag{D}{$D$}
\psfrag{dp}{$D'$}
\psfrag{S}{$S$}
\psfrag{bs}{$\bdy S$}
\psfrag{bm}{$\bdy M$}
\begin{center}
\includegraphics{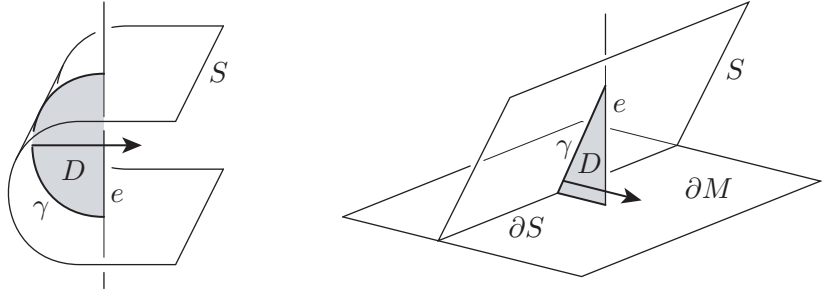}
\end{center}
\caption{When a surface violates condition \eqref{i:bent-arc} of normality, then an isotopy in the direction of the arrow removes intersections between $S$ and all the faces that meet edge $e$.}
\label{fig:normalize}
\end{figure}

Suppose a component $\gamma'$ of $S \cap \bdy P$ intersects an edge $e$ twice, violating \eqref{i:edge-compression}. Let $\gamma$ be the closure of a component of $\gamma'-e$ such that $\gamma$ together with a subarc of $e$ cobound a disk $D$. By passing to an outermost arc of intersection between $S$ and $D$, we can assume that $D\cap S = \partial D \cap S=\gamma$. If $\gamma$ is contained in a face of $\partial P$, then we violate \eqref{i:bent-arc}. Hence, we can assume that $\gamma$ meets the face of $\partial P$ that contains a neighborhood of $\partial \gamma$ in at least two components. While fixing $\partial D \cap e$ isotope the rest of $D$ slightly into the interior of $P$. If $S$ meets the interior of $D$ it does so in simple closed curves. Since $S$ meets $P$ in a collection of properly embedded disks, then we can eliminate all components of intersection between $S$ and the interior of $D$ via a isotopy of $S$ that is supported in the interior of $P$ and fixes $c(S)$. After this isotopy, $D$ is a boundary compressing disk for the component of $S\cap P$ that contains $\gamma'$ in its boundary. As in Figure \ref{fig:normalize}, left, we may use $D$ to guide an isotopy of $S$ past edge $e$. Since $\gamma$ meets the face of $\partial P$ that contains a neighborhood of $\partial \gamma$ in at least two components, this isotopy will strictly reduce $\#(S \cap \sigma)$ for that face and will not increase $\#(S \cap \sigma)$ for every other face $\sigma$ that meets $e$.

Finally, suppose that $\gamma$ meets a boundary face twice, violating \eqref{i:bdy-compression}. Then the polyhedron $P$ contains a boundary compression disk $D$ for $S$ such that $\partial D \cap \partial M$ is contained in the boundary face. Since $S$ is boundary--incompressible, $\gamma$ must also cut off a disk $D' \subset S$, as in Figure \ref{fig:bdy-normalize}. Since $S^3 \setminus L$ is irreducible, it follows that the disk $D \cup_\gamma D'$ is boundary--parallel. Thus we may isotope $S$ through a boundary--parallel ball, moving $D'$ past $D$, which eliminates all components of intersection between $D'$ and $\partial P$. Since $\partial D'$ meets the edge of a boundary face, this isotopy strictly lowers $c(S)$.

\begin{figure}
\psfrag{g}{$\gamma$}
\psfrag{dp}{$D'$}
\psfrag{e}{$e$}
\psfrag{D}{$D$}
\psfrag{S}{$S$}
\psfrag{bs}{$\bdy S$}
\psfrag{bm}{$\bdy M$}
\begin{center}
\includegraphics{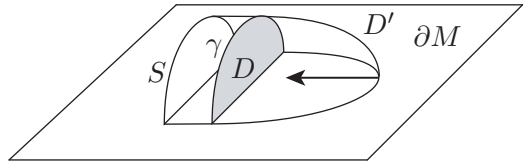}
\end{center}
\caption{When a surface violates condition \eqref{i:bdy-compression} of normality, isotoping disk $D'$ past $D$ removes intersections between $S$ and the faces.}
\label{fig:bdy-normalize}
\end{figure}

Since each of the above moves reduces the complexity $c(S)$, a minimum--complexity position will be normal.
\end{proof}

As a consequence, we get the following structural statement.

\begin{lemma}\label{lemma:normal-in-disks}
Let $F^\circ = F \setminus \bigcup_i C_i$ be as in Lemma \ref{lemma:essential-remnant}. Suppose that $F^\circ$ has been isotoped into normal form via the procedure of Lemma \ref{lemma:normalize}. Then the following hold.
\begin{enumerate}
\item\label{i:arc-in-disk} For each crossing disk $D_i$, each component of $F^\circ \cap D_i$ is an essential arc in $D_i$ with endpoints in $C_i$.
\item\label{i:arc-in-face} For each shaded face $\sigma$ of the polyhedra, $F^\circ \cap \sigma$ is an arc from an ideal vertex at a crossing circle to the opposite edge. See Figure \ref{fig:surf-in-disk}.
\end{enumerate}
\end{lemma}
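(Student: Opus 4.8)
The plan is to read off both conclusions from the fact that $F^\circ$ sits in normal form with respect to the polyhedra of Theorem~\ref{thm:aug-geometry}, having been placed there by the complexity--monotone procedure of Lemma~\ref{lemma:normalize} starting from the position of Lemma~\ref{lemma:essential-remnant}. The structural input I will lean on is Theorem~\ref{thm:aug-geometry}(1): every shaded face is a triangle carved out of some crossing disk $D_i$, with exactly one ideal vertex on the crossing--circle cusp, so that $\bigcup_i D_i$ is exactly the union of the shaded faces together with the edges and ideal vertices between them.

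For conclusion \eqref{i:arc-in-disk}, the point is that the normalization isotopy of Lemma~\ref{lemma:normalize} never increases $c(F^\circ)$, and since the shaded faces are pieces of the $D_i$, this isotopy does not increase $\#\big(F^\circ\cap\bigcup_i D_i\big)$; in particular the number of boundary circles of $F^\circ$ on each crossing--circle cusp stays equal to the minimum $\#(F\cap C_i)$ provided by Corollary~\ref{cor:min-ci}. I then rerun the argument of Lemma~\ref{lemma:pushout} with $F^\circ$ in place of $F$, now using that $F^\circ$ is essential and $c$--incompressible in $S^3\setminus L$ (Lemma~\ref{lemma:essential-remnant}): a closed curve of $F^\circ\cap D_i$ must be trivial, meridional, or parallel to $C_i$ in the twice--punctured disk $D_i$, and in each case an innermost disk (resp.\ annulus) isotopy removes it while strictly lowering $c(F^\circ)$, contradicting minimality; every arc of $F^\circ\cap D_i$ has both endpoints on $\partial D_i$, since $F^\circ$ is disjoint from the link and hence from the punctures of $D_i$; and an inessential such arc could be pushed off $D_i$ by an outermost--arc isotopy that strictly lowers $\#(F\cap C_i)$, contradicting Corollary~\ref{cor:min-ci}. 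Hence every component of $F^\circ\cap D_i$ is an essential arc with endpoints on $C_i$.

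For conclusion \eqref{i:arc-in-face}, fix a shaded face $\sigma$, with ideal vertex $v$ on a crossing--circle cusp, the other two ideal vertices $w_1,w_2$ on cusps of the two strands of $L$ running through the corresponding disk $D_i$, and let $e_0$ be the edge of $\sigma$ opposite $v$. Since $F^\circ$ is disjoint from the link, no arc of $F^\circ\cap\sigma$ meets the boundary faces at $w_1$ or $w_2$, so such an arc has its endpoints among the three edges of $\sigma$ and the boundary face at $v$; conditions \eqref{i:bent-arc}--\eqref{i:bdy-compression} of Definition~\ref{normal-def} then leave only four possibilities: cutting off $v$, cutting off $w_1$, cutting off $w_2$, or running from the boundary face at $v$ to $e_0$. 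To rule out the three ``corner'' types I invoke \eqref{i:arc-in-disk}: a component of $F^\circ\cap D_i$ is an essential arc of the twice--punctured disk $D_i$, hence separates $w_1$ from $w_2$ and has both endpoints on $\partial D_i$; following such an arc through the tiling of $D_i$ by its shaded triangles (Figure~\ref{fig:polyhedraldecom}) shows that in each triangle it must enter through that triangle's crossing--circle boundary face and leave across the opposite edge, i.e.\ be of the fourth type. This recovers the arc drawn in Figure~\ref{fig:surf-in-disk}.

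The main obstacle is this last matching step: one must pin down precisely how the shaded triangles of a single crossing disk are glued to one another in the decomposition of Theorem~\ref{thm:aug-geometry} (Figure~\ref{fig:polyhedraldecom}), so that a puncture--separating arc can thread through them only in the stated way and the three corner types genuinely cannot occur. A secondary point of care, in the second paragraph, is confirming that the complexity--monotone normalization of Lemma~\ref{lemma:normalize} cannot surreptitiously create extra arc components of $F^\circ\cap D_i$ while decreasing $c(F^\circ)$ --- which is exactly where one uses that the shaded faces are nothing other than the pieces of the crossing disks.
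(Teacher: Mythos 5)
There is a genuine gap, and it sits exactly where you flagged it -- but also one step earlier. Your strategy is to re-derive both conclusions \emph{after} normalization, from normality plus essentiality of $F^\circ$ in $S^3\setminus L$. This runs into two problems. First, in rerunning Lemma \ref{lemma:pushout} inside $S^3\setminus L$, the removal of a closed curve of $F^\circ\cap D_i$ parallel to $C_i$ is not available: the paper removes such curves by isotoping $F$ \emph{past the crossing circle} in $S^3\setminus K$, and once $C_i$ is drilled out no such isotopy exists; moreover such a curve bounds neither a disk nor a once-punctured disk on its $D_i$-side, so $c$--incompressibility of $F^\circ$ gives nothing, and the ``minimality of $c(F^\circ)$'' you invoke is not supplied by Lemma \ref{lemma:normalize} (it produces a normal surface by complexity--monotone moves, not a complexity--minimal one). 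Second, your ``threading'' step for conclusion \eqref{i:arc-in-face} is false as a purely combinatorial claim: an essential arc of $F^\circ\cap D_i$ is a concatenation of face-arcs belonging to \emph{different} normal disks, and Definition \ref{normal-def} constrains each disk separately. A twisted representative of an essential, puncture-separating arc can cross the edges of the two shaded triangles several times, containing corner-cutting arcs of types $a$--$c$ or $a$--$b$, with every individual face-arc normal; essentiality alone does not force the vertex-to-opposite-edge form. (A smaller issue: ``$F^\circ$ is disjoint from the link'' does not rule out arcs meeting the boundary faces at $w_1,w_2$ in the meridional case, which the lemma must cover for Theorem \ref{thm:meridional}.)

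The missing ingredient is the paper's counting argument, which works in the opposite direction: both conclusions are arranged \emph{before} normalization (Lemma \ref{lemma:pushout} plus a straightening isotopy near the $D_i$ so each component meets each shaded face in one arc), at which point the total number of shaded-face intersections equals $2\sum_i\#(F\cap D_i)=\sum_i\#(F\cap C_i)$, which is already minimal by Corollary \ref{cor:min-ci}. Since the normalization of Lemma \ref{lemma:normalize} never increases any face count, the shaded-face counts cannot change, and therefore neither can the configuration -- each component stays an essential arc made of exactly two vertex-to-opposite-edge face-arcs. This single observation is what simultaneously excludes the stray closed curves, the inessential arcs, and the corner-cutting face-arcs that your post-normalization argument cannot eliminate; without it (or an equivalent bridge back to the minimal position in $S^3\setminus K$), the proposal does not close.
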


\begin{proof} Recall that in the construction of $F^{\circ}$, we have assumed that we have isotoped $F$ to minimize the number of components of $F\cap \bigcup_i D_i$. Hence, by Lemma \ref{lemma:pushout}, conclusion \eqref{i:arc-in-disk} holds before we begin the normalization procedure. Additionally, there is an isotopy of $F^\circ$ supported in a neighborhood of the $D_i$ such that after this isotopy any component of $F^{\circ}\cap D_i$ meets any shaded face of the polyhedra in at most one arc. Since each arc of intersection of $F^{\circ}\cap D_i$ is essential in $D_i$, then for each shaded face $\sigma$ of the polyhedra, $F^\circ \cap \sigma$ is an arc from an ideal vertex at a crossing circle to the opposite edge. Hence, we can assume both conclusion \eqref{i:arc-in-disk} and conclusion \eqref{i:arc-in-face} hold before we begin the normalization procedure.

We claim that before the normalization procedure, the total number of arcs of $F^\circ$ in shaded faces is
$$2 \sum_i \# \big(F \cap D_i \big) = \sum_i \# \big(F \cap C_i \big).$$
This is because each component of $F^\circ \cap D_i$ runs from $C_i$ to $C_i$, and consists of one arc in each of the two shaded faces comprising $D_i$. Each such arc runs from an ideal vertex at $C_i$ to the opposite edge, as in Figure \ref{fig:surf-in-disk}.

\begin{figure}
\begin{overpic}{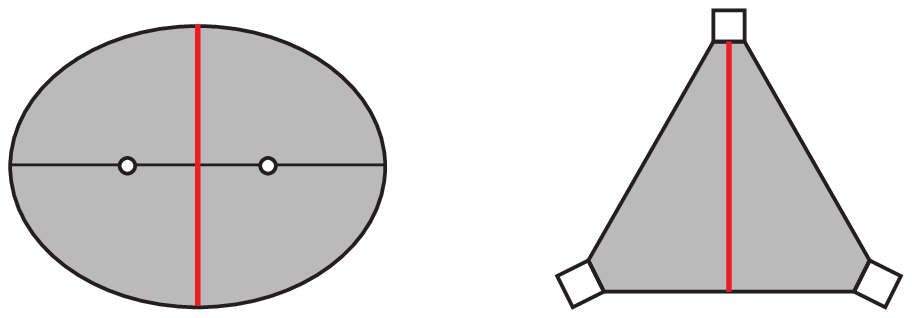}
\put(5,31){$C_i$}
\put(26,22){$\sigma$}
\put(26, 8){$\sigma'$}
\put(73,31){$C_i$}
\put(57,0){$K$}
\put(99.5,0){$K$}
\end{overpic}
\caption{Left: normal disk $D_i$ is subdivided by the projection plane of $L$ into two shaded faces, one in each polyhedron. By Lemma \ref{lemma:normal-in-disks}, each component of $F^\circ \cap D_i$ is a collection of arcs from $C_i$ to $C_i$. Right: the picture in a single shaded face $\sigma$.}
\label{fig:surf-in-disk}
\end{figure}

Now, consider what happens during the normalization procedure of Lemma \ref{lemma:normalize}. That procedure monotonically reduces the complexity $c(F)$. In other words, for every face $\sigma$, $\# (F \cap \sigma)$ either stays constant or goes down.
But by Corollary \ref{cor:min-ci}, the quantity $2 \sum_i \# \big(F \cap D_i \big)$ is already minimal before normalization. Since this quantity is the total number of intersections between $F^\circ$ and the shaded faces, it follows that $\# (F \cap \sigma)$ stays constant for every shaded face $\sigma$. This means that the intersections between $F$ and the shaded faces remain as in Figure \ref{fig:surf-in-disk}, and conclusions \eqref{i:arc-in-disk} and \eqref{i:arc-in-face} remain true throughout the normalization process.
\end{proof}

\begin{lemma}\label{lemma:cusp-curve}
Assume that $F^\circ$ is in normal form. For each cusp torus $T_i$ corresponding to crossing circle $C_i$, each component of $\bdy F^\circ \cap T_i$ consists of $(n_i -1)$ segments parallel to shaded faces and $2$ diagonal segments that have one endpoint on a white face and one endpoint on a shaded face. Here, $n_i$ is the number of crossings in the twist region of $C_i$. See Figure \ref{fig:cusp-curve}.
\end{lemma}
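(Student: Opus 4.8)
The plan is to analyze the cusp torus $T_i$ directly from the polyhedral decomposition, tracking how $\bdy F^\circ$ enters and leaves the neighborhood of $C_i$. First I would recall the structure of $T_i$ in the polyhedral decomposition of $S^3 \setminus L$: since $C_i$ bounds the crossing disk $D_i$, and $D_i$ is cut by the projection plane into the two shaded triangular faces $\sigma, \sigma'$ (one in each polyhedron $P$, $P'$), the cusp cross-section $T_i$ is tiled by boundary faces coming from the truncated ideal vertices along $C_i$. Because the twist region of $C_i$ has $n_i$ crossings, there are $2n_i$ strands of $L$ passing through the two lobes of $D_i$ — or more precisely, after removing full twists the combinatorics is: $C_i$ meets the projection plane in two points and meets the shaded faces of $P \cup P'$ in a pattern of $n_i$ squares (white-face boundary faces) separated by shaded-face boundary faces. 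The standard picture (as in Purcell's survey and Figure \ref{fig:cusp-curve}) shows $T_i$ as a cylinder made of a strip of $2n_i$ triangles/squares, with the shaded-face edges appearing $(n_i-1)$ times along each of the two sides coming around from $D_i$.

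Next I would use Lemma \ref{lemma:normal-in-disks}: each component of $F^\circ \cap D_i$ is a single essential arc running from $C_i$ to $C_i$, meeting each of the two shaded faces $\sigma, \sigma'$ in exactly one arc that goes from the ideal vertex at $C_i$ to the opposite edge. Therefore a component of $\bdy F^\circ \cap T_i$ is a closed curve on $T_i$ that, being normal, is transverse to the tiling and crosses each boundary face at most once. I would trace such a curve: it must cross the cusp torus "longitudinally" in the direction that corresponds to running alongside the crossing disk arc. As it passes the portion of $T_i$ coming from $\sigma$ it produces one diagonal segment (endpoint on the shaded boundary face of $\sigma$, endpoint on an adjacent white boundary face), then runs through the $(n_i-1)$ boundary squares of white faces stacked between the two shaded sides, contributing $(n_i-1)$ segments parallel to shaded faces, then passes the $\sigma'$ portion contributing the second diagonal segment, closing up. Counting segments gives exactly $(n_i-1)$ shaded-parallel segments and $2$ diagonal segments per component, which is the claim.

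The main obstacle I expect is \emph{pinning down the precise combinatorial model of $T_i$} — getting the count of $(n_i-1)$ right rather than $n_i$ or $n_i+1$, and correctly identifying which boundary faces are "shaded-parallel" versus "diagonal." This requires careful bookkeeping of how the $n_i$ crossings of the twist region contribute bigon faces to the polyhedra and how the truncation produces boundary faces around $C_i$; the off-by-one comes from the two shaded faces $\sigma, \sigma'$ "using up" two of the slots. I would handle this by drawing the universal cover of $T_i$ as an infinite strip, marking the lift of the normal arc in $D_i$, and appealing to normality (each edge and each boundary face met at most once, conditions \eqref{i:edge-compression} and \eqref{i:bdy-compression}) to show the curve cannot backtrack, so the segment types and their multiplicities are forced. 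A secondary point to check is that distinct components of $\bdy F^\circ \cap T_i$ are parallel on $T_i$ and hence all have this same description, which again follows since each is a normal curve realizing the same slope forced by the $D_i$-arcs.
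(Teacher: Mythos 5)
There is a genuine gap here, and it comes from your combinatorial model of the cusp torus $T_i$. In the polyhedral decomposition of the \emph{augmented} link complement $S^3\setminus L$, the cusp torus of a crossing circle is tiled by exactly two rectangular boundary faces, one in each polyhedron (this is \cite[Lemma 2.6]{futer-purcell:surgery}, quoted at the start of the paper's proof); its combinatorics is completely independent of $n_i$, because all full twists have been removed in passing from $J$ to $L$. Your proposed tiling of $T_i$ by a strip of $2n_i$ faces, with shaded edges occurring $(n_i-1)$ times, is therefore incorrect, and tracing a curve once through such a strip cannot be made to work: there is simply no trace of $n_i$ in the cusp tiling itself. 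The missing idea is that $n_i$ enters only through the \emph{slope} of $\bdy F^\circ$ on $T_i$: since $F$ meets $C_i$ transversally, $\bdy F^\circ$ is a meridian of $C_i$ in $S^3\setminus J$, and under the untwisting homeomorphism $S^3\setminus J\cong S^3\setminus L$ this meridian becomes the class $\w+n_i\s$ in the $\s,\w$ lattice on $T_i$ (\cite[Theorem 2.7]{futer-purcell:surgery}). The paper then combines this homological fact with Lemma \ref{lemma:normal-in-disks}, which forces each component of $\bdy F^\circ\cap T_i$ to have \emph{geometric} intersection number $1$ with the shaded faces; realizing the class $\w+n_i\s$ with a single shaded crossing forces exactly $(n_i-1)$ segments parallel to $\s$ plus two diagonal segments summing to $\w+\s$. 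Without the slope computation, your argument has no source for the number $n_i-1$.

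A secondary problem is your appeal to normality conditions \eqref{i:edge-compression} and \eqref{i:bdy-compression} to prevent backtracking: those conditions constrain the boundary of a \emph{single} normal disk, whereas a component of $\bdy F^\circ\cap T_i$ is assembled from segments belonging to many different normal disks and in fact passes through each of the two boundary rectangles of $T_i$ many times (about $(n_i+1)/2$ times each). So even granting a correct tiling, the ``each boundary face at most once'' bookkeeping does not apply to the whole curve, and monotonicity must instead be extracted from the homology class together with the geometric intersection bound coming from Lemma \ref{lemma:normal-in-disks}, as in the paper.
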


\begin{proof}
Recall from \cite[Lemma 2.6]{futer-purcell:surgery} that the cusp torus $T_i$ corresponding to crossing circle $C_i$ is cut by the polyhedra into two rectangular boundary faces, one in each polyhedron. In the universal cover of $T_i$, we have a rectangular lattice spanned by $\s$ and $\w$, where $\s$ is a step parallel to a shaded face (horizontal in Figure \ref{fig:cusp-curve}) and $\w$ is a step parallel to a white face (vertical in Figure \ref{fig:cusp-curve}). In order to recover $S^3 \setminus K$ from $S^3 \setminus L$, we need to fill the torus $T_i$ along a slope corresponding to the meridian of $C_i$ in $S^3 \setminus J$. By \cite[Theorem 2.7]{futer-purcell:surgery}, this Dehn filling slope is homologous to $\w + n_i \s$.

\begin{figure}
\begin{overpic}{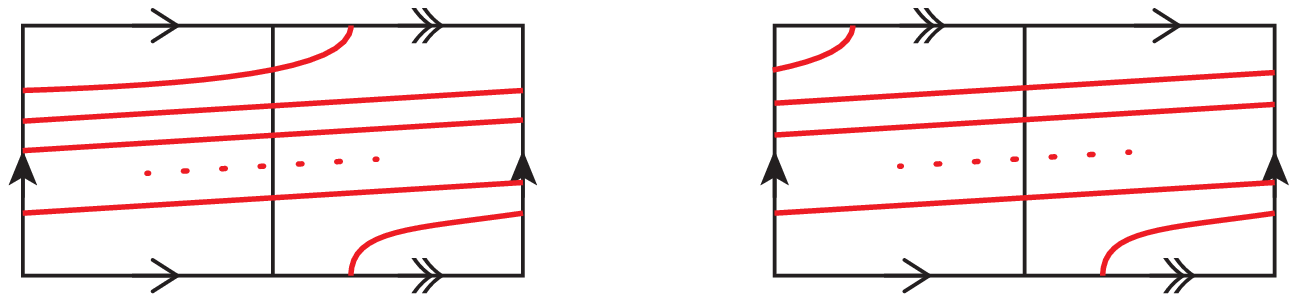}
\end{overpic}
\caption{The cusp torus $T_i$ of a crossing circle $C_i$ is subdivided into two boundary rectangles. There are two combinatorial possibilities, depending on whether the number of crossings  $n_i$ in the twist region  is even (shown on the left) or odd (shown on right). The normal curve in $T_i$ representing a component of $\bdy F^\circ$ must be as shown in red.}
\label{fig:cusp-curve}
\end{figure}

In  $S^3 \setminus J$, the punctured surface $F^\circ$ meets the neighborhood of each crossing circle in a meridian. By the above paragraph, each component of $\bdy F^\circ$ on $T_i$ has homological intersection $\pm 1$ with the shaded faces. On the other hand, by Lemma \ref{lemma:normal-in-disks}, each puncture of $F^\circ$ at $C_i$ gives rise to a single arc in the the shaded disk. Thus, each curve of $\bdy F^\circ$ on $T_i$ only has \emph{geometric} intersection number $1$ with the shaded faces. The only way to do this while staying in the homology class $\w + n_i \s$ is to take $(n_i - 1)$ segments parallel to $\s$, along with two diagonal segments whose sum is $\w + \s$.
\end{proof}

In the following section we will need a vocabulary that allows us to translate combinatorial statements regarding normal loops in the boundary of $P$ into combinatorial statements regarding the knot diagram $D(K)$. The following remark provides this translation.

\begin{remark}\label{rmk:poly-to-projection}
The homeomorphism from $S^3 \setminus J$ to $S^3 \setminus L$ can be taken to be the identity outside of a neighborhood of the union of the crossing disks in $S^3$. We can view this fact diagrammatically by shrinking the twist regions in the diagram of $K$ until each is contained in the regular neighborhood of the arc of intersection between the corresponding crossing disk and the plane of projection for $K$. By Theorem \ref{thm:aug-geometry}, the white faces of the polyhedral decomposition of the complement of $L$ meet the complement of the neighborhood of the union of the crossing disks in $S^3$ exactly in the plane of projection for $L$. Equivalently, the white faces of the polyhedral decomposition of the complement of $J$ meet the complement of a regular neighborhood of the twist regions of $K$ exactly in the plane of projection for $K$. In this way, arcs and loops in the white faces of the polyhedral decomposition are arcs and loops in the complement of the twist regions in the plane of projection for $K$.

Additionally, in light of Lemma \ref{lemma:cusp-curve}, we know exactly how a normal surface meets the faces of the boundary of a polyhedron that correspond to cusp tori. In particular, if a normal loop in the boundary of a polyhedron meets only white faces and cusp tori faces, then each component of intersection with the cusp tori faces is a segment in the $\s$ direction. Hence, if a normal surface meets the boundary of a polyhedron in a loop that is disjoint from the shaded faces and this loop meets the collection of cusp tori faces in $n$ components, then there is a curve in the plane of projection for $K$ that cuts through twist regions $n$ times and meets $K$ in exactly $2n$ points. See Figure \ref{fig:curves}.

\begin{figure}
\begin{overpic}[width=4.5in]{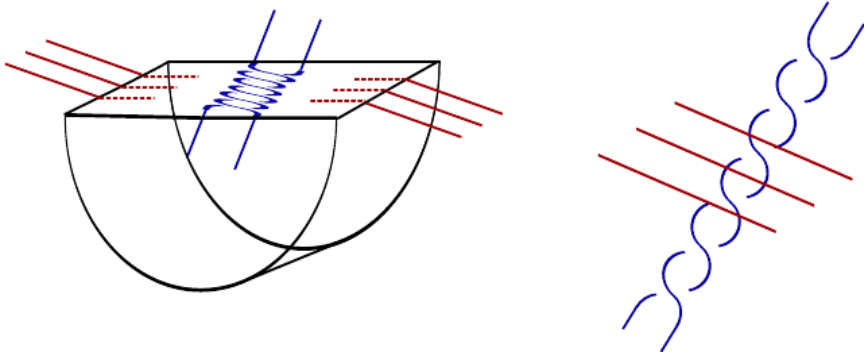}
\end{overpic}
\caption{Curves in $\partial P$ that are contained in the union of the white faces and the boundary faces give rise to curves in the plane of projection.}
\label{fig:curves}
\end{figure}
\end{remark}

\section{Intersections with crossing circles}\label{sec:crossing-intersect}

In this section, we bound from below the number of times that a $c$--incompressible surface $F$ must meet the crossing circles. We note that some, but not all, of the subsequent lemmas carry the hypothesis that $F$ is closed. This will allow us maximum flexibility in proving Theorems \ref{thm:closed} and \ref{thm:meridional}.

\begin{lemma}\label{lemma:int-1-circle}
Suppose that $F \subset S^3 \setminus K$ is a closed, $c$--incompressible surface. Then $F$ must intersect a crossing circle.
\end{lemma}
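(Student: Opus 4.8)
The plan is to argue by contradiction: suppose $F$ is a closed, essential, $c$-incompressible surface in $S^3 \setminus K$ that is disjoint from every crossing circle $C_i$. First I would invoke Lemma~\ref{lemma:essential-remnant} (with the understanding that when $F \cap \bigcup_i C_i = \emptyset$ we have $F^\circ = F$, so the hypothesis of minimal intersection with the crossing disks is vacuous): the surface $F$ is then already an essential, $c$-incompressible closed surface in the augmented link complement $S^3 \setminus L$. By Theorem~\ref{thm:aug-geometry}, $S^3 \setminus L$ is hyperbolic, hence atoroidal and acylindrical; in particular it contains no closed essential surface of nonnegative Euler characteristic, and more to the point its only closed incompressible surfaces are not embedded in a rigid geometric way that we need. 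The cleaner route is to normalize.

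So the main step is: normalize $F$ in the polyhedral decomposition of $S^3 \setminus L$ using Lemma~\ref{lemma:normalize} (here $M = S^3 \setminus L$ is irreducible with incompressible torus boundary, so the hypotheses are met), and then analyze the normal disks. Since $F$ is disjoint from all crossing circles $C_i$, after normalization $F$ meets none of the cusp tori $T_i$ and in fact can be pushed off the shaded faces entirely: a normal disk meeting a shaded face must, by the combinatorics of Theorem~\ref{thm:aug-geometry}(1)–(2), run to an ideal vertex on a crossing circle, which would force $\bdy F^\circ$ to meet some $T_i$. Thus every normal disk of $F$ has boundary lying in the union of the white faces and the cusp tori of the \emph{original} link components of $L$ — i.e., the components coming from $K$. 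Now I would apply Remark~\ref{rmk:poly-to-projection}: each such normal loop gives rise to a closed curve in the projection plane of $K$, disjoint from all twist regions, meeting $K$ in some number of points. Using that $F$ is closed (no boundary) together with the description of the white faces as the regions into which $L$ cuts the projection plane, I would push $F$ entirely into (a neighborhood of) the projection plane, or more precisely show that each normal disk is a "white disk" whose boundary is a curve in the diagram complement.

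The hard part — and the real content of the lemma — is the final contradiction: a curve in the projection plane avoiding all twist regions and bounding a normal disk on one side must, because the diagram is \emph{connected, prime, and twist-reduced with at least $2$ twist regions}, either bound a disk disjoint from $K$ in the plane (making the corresponding piece of $F$ inessential, contradicting essentiality), or separate the diagram into two nontrivial pieces — but primeness and connectedness of $D(K)$, together with $h(D) \geq 6 \geq 1$ so that twist regions are nonempty, force any such curve to be inessential in the diagram. Running this through all normal disks of $F$ shows $F$ compresses or is boundary-parallel (here, parallel into a torus $T_j$ coming from $K$, i.e. $F$ bounds a solid torus, contradicting that $F$ is essential in $S^3 \setminus K$). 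I expect the delicate bookkeeping to be in ruling out the case where $F$ is a normal sphere or where the projection-plane curves are nested, which should be handled exactly as in the closed-curve-elimination step of Lemma~\ref{lemma:pushout} and the incompressibility arguments of Lemma~\ref{lemma:essential-remnant}; the hyperbolicity of $S^3 \setminus L$ from Theorem~\ref{thm:aug-geometry} can also be used as a shortcut to kill any normal sphere or boundary-parallel torus outright.
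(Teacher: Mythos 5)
Your setup matches the paper's: assume $F$ misses every crossing circle, note $F = F^\circ$, normalize in the polyhedral decomposition, and conclude (via Lemma \ref{lemma:normal-in-disks}) that $F$ must be disjoint from all shaded faces. Up to that point you are on track. But the proposal then has a genuine gap: the concluding contradiction is never actually established. You route the argument through Remark \ref{rmk:poly-to-projection} into the projection plane and propose to contradict primeness, twist-reducedness, or essentiality of $F$ via curves ``meeting $K$ in some number of points,'' explicitly flagging this as ``the hard part'' with ``delicate bookkeeping'' still to be done. That plan is both unnecessary and, as sketched, not a proof: a closed surface in $S^3 \setminus K$ has empty boundary, so $F^\circ = F$ meets \emph{no} boundary faces at all --- neither the crossing-circle cusps nor the cusps of $K$ --- so there are no projection-plane curves meeting $K$ to analyze, and the compressibility/boundary-parallelism dichotomy you invoke is never derived. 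Allowing normal disks of a closed $F$ to run into the cusp tori of $K$ is the conceptual slip that sends you down this longer road.

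The point you are missing is the checkerboard coloring from Theorem \ref{thm:aug-geometry}: every edge of every white face is adjacent to a shaded face, so a surface disjoint from the shaded faces (and, being closed, from all boundary faces) cannot cross any edge of the polyhedra. Hence any curve of $F \cap \bdy P$ would lie entirely inside a single white face, violating condition \eqref{i:loop} of Definition \ref{normal-def}. That is the entire contradiction in the paper --- no appeal to primeness, twist-reducedness, essentiality in $S^3\setminus L$, or hyperbolicity is needed once $F$ is normal and disjoint from the shaded faces. (A smaller inaccuracy: minimality of $F \cap \bigcup_i D_i$ is not ``vacuous'' when $F$ misses the crossing circles, since $F$ could a priori meet the crossing disks in closed curves; you still need Lemma \ref{lemma:pushout} to remove these, which is exactly how the paper justifies that $F$ is disjoint from the crossing disks.)
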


\begin{proof}
Suppose that $F$ is disjoint from every $C_i$. Then $F = F^\circ$, and by Lemma \ref{lemma:normal-in-disks}, we can assume $F$ is normal and disjoint from the crossing disks. By Theorem \ref{thm:aug-geometry} the shaded faces of $P \cup P'$ glue to form the crossing disks. Thus $F \cap (\bdy P \cup \bdy P')$ is entirely contained in the white faces.

Since $P$ and $P'$ are checkerboard colored, every side of every white face borders on a shaded face. But $F$ is disjoint from the shaded faces, hence it cannot meet any edge of the white faces. Thus any intersection of $F$ with a white face must be a simple closed curve, contradicting the normality of $F = F^\circ$.
\end{proof}

\begin{lemma}\label{lemma:int-2-circles}
Suppose that $F \subset S^3 \setminus K$ is a $c$--incompressible surface, either meridional or closed. Let $\Delta \subset F^\circ$ be a normal disk that meets exactly one crossing circle cusp. Then $\Delta$ must also meet a cusp corresponding to $K$.

In particular, if $F$ is closed, some normal disk $\Delta \subset F^\circ$ must meet  at least two crossing circles.
\end{lemma}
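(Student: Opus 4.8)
Suppose, for contradiction, that $\Delta$ meets exactly one crossing circle cusp $T_i$ but no cusp of $K$. Since $\Delta$ is a normal disk, condition~\eqref{i:bdy-compression} of Definition~\ref{normal-def} forces $\partial\Delta$ to meet the boundary face $T_i\cap P$ in a single arc $a$; and since $\Delta$ meets no other boundary face, $\partial\Delta\setminus a$ is a single arc $\beta$ contained in the shaded and white faces of $P$ and having the same endpoints as $a$. The key point is that $\beta$ is severely constrained. By conclusion~\eqref{i:arc-in-face} of Lemma~\ref{lemma:normal-in-disks}, every arc of $F^\circ$ in a shaded face runs from the ideal vertex of its crossing circle to the opposite edge, hence has an endpoint on that crossing circle's boundary face. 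As $\partial\Delta$ meets only $T_i\cap P$, and only in $a$, every arc of $\beta$ lying in a shaded face has an endpoint equal to an endpoint of $a$; so $\beta$ contains at most two shaded arcs, and only as its first and/or last arc. Because every edge of $P$ separates a shaded face from a white face, $\beta$ alternates between shaded and white faces as it crosses edges.

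Now I would split into cases according to the arc $a$. By Lemma~\ref{lemma:cusp-curve}, $a$ is either a segment parallel to the shaded faces (an $\s$--segment) or a diagonal segment of the rectangle $T_i\cap P$; it cannot run between the two shaded sides of that rectangle. If $a$ is an $\s$--segment, then both its endpoints lie on white sides of $T_i\cap P$, so $\beta$ has no shaded arcs and hence lies in a single white face. Thus $\partial\Delta$ is a normal loop that is disjoint from the shaded faces and meets one cusp face, and Remark~\ref{rmk:poly-to-projection} translates it into a simple closed curve $\gamma$ in the projection plane meeting $K$ in exactly two points and passing once between two consecutive crossings of the twist region of $C_i$. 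Since $D(K)$ is prime, $\gamma$ bounds a disk in the projection plane disjoint from all crossings; but $h(D)\ge 6$ forces that twist region to have crossings on both sides of $\gamma$, so both complementary disks contain crossings, a contradiction.

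If instead $a$ is a diagonal, then $\beta$ contains exactly one shaded arc $b$, running from $T_i\cap P$ across a shaded triangle $\sigma\subseteq D_i$ to the edge $e$ of $\sigma$ opposite $C_i$. After $b$, the rest of $\beta$ must stay in white faces: it cannot cross any edge of $P$, since each borders a shaded face and would force a second shaded arc that is not available, and it cannot enter a boundary face, since the only one it is allowed to meet is $T_i\cap P$ whose single arc $a$ is already used. Hence $\beta=b\cup c$, where $c$ is a single arc in the white face $W'$ meeting $e$, and $c$ must end on a side of $T_i\cap P$ lying in $W'$. But the two sides of $T_i\cap P$ contained in white faces border the two white faces incident to the vertex $C_i$, whereas $W'$ is incident to the edge $e$ opposite $C_i$; inspecting the local combinatorics of the shaded triangle $\sigma$ — whose two vertices other than $C_i$ lie on strands of $K$ — shows $W'$ is none of these, so $c$ cannot close up, a contradiction. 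This diagonal case is the main obstacle, being the one place where one must use the fine structure of the Adams--Agol--Thurston polyhedra (which edges and vertices of a shaded triangle come from crossing circles and which from $K$, and which white faces are incident to $C_i$) rather than mere bookkeeping of normal arcs.

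The parenthetical statement now follows formally. If $F$ is closed, then $\partial F^\circ$ lies entirely on crossing circle cusps, so no normal disk of $F^\circ$ meets a cusp of $K$; by the statement just proved, no normal disk of $F^\circ$ meets exactly one crossing circle cusp. On the other hand, Lemma~\ref{lemma:int-1-circle} shows $F=F^\circ$ meets some crossing circle, so some normal disk of $F^\circ$ has boundary on a crossing circle cusp, and therefore meets at least two crossing circles.
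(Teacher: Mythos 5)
Your proof is correct and follows essentially the same route as the paper: you split on whether the segment in the crossing-circle boundary face is parallel to the shaded faces or diagonal, use Lemma \ref{lemma:normal-in-disks} to show any shaded arc of $\partial\Delta$ would force a second (forbidden) crossing-circle boundary-face intersection, and then invoke Remark \ref{rmk:poly-to-projection} together with primeness of $D(K)$ in the first case. Your ``$W'$ is not one of the white faces incident to the vertex $C_i$'' step in the diagonal case is exactly the paper's observation that the two ends of $\gamma$ are separated by an odd number of knot strands, so the approaches coincide.
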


\begin{proof}
Let $\gamma \subset \bdy \Delta$ be the unique arc of $\bdy \Delta$ in a boundary face $T_i$ corresponding to a crossing circle $C_i$. By Lemma   \ref{lemma:cusp-curve}, $\gamma$ is either a segment in the $\s$ direction, parallel to a shaded face, or else a diagonal segment that runs from a white face to a shaded face. We will consider these possibilities in turn.

\underline{\emph{Case 1:}}  $\gamma$ runs parallel to the shaded faces, from a white face $\omega'$ to another white face $\omega$. Consider where $\bdy \Delta$ can go next. If $\bdy \Delta$  crosses an edge of a polyhedron into a shaded face $\sigma$, Lemma \ref{lemma:normal-in-disks} implies that it must next run into a boundary face $T_j$ corresponding to some crossing circle $C_j$. But by hypothesis, $\bdy \Delta$ meets only one boundary face, hence $T_i = T_j$. Thus, $\bdy \Delta$ must meet $T_i$ both in a segment parallel to a shaded face and in a diagonal segment that runs from a white face to a shaded face, contradicting normality.

If $\bdy D$ runs from $\omega$ directly into the boundary face $T_j$ of a crossing circle $C_j$, then again we must have $T_i = T_j$, which means that $\omega = \omega'$ and $\bdy \Delta$ contains only two segments.
But then, as Figure \ref{onecircle} shows, we can use Remark \ref{rmk:poly-to-projection} to find a loop in $D(K)$ corresponding to $\gamma$ that intersects $K$ twice with non-trivial regions on each side. This contradicts the primeness of the diagram $D(K)$.

The remaining possibility, if $\gamma$ is a segment in the $\s$ direction, is that $\bdy \Delta$ runs through $\omega$ to the a truncated ideal vertex corresponding to $K$. This is our desired conclusion.

\smallskip

\underline{\emph{Case 2:}} $\gamma$ is a diagonal segment that runs from a shaded face $\sigma$ to a white face $\omega$. Then, observe that the two ends of $\gamma$ are separated by an odd number of knot strands. (See Figure \ref{onecircle}.) Thus, to form a closed curve, $\bdy \Delta$ must either cross a strand of $K$, which is our desired conclusion, or cross through another shaded face $\sigma'$. But then, as above, $\bdy \Delta$ would have to run through $\sigma'$ to a boundary face $T_j$ of some crossing circle $C_j$, which contradicts either normality (if $T_i = T_j$) or the hypotheses (if $T_i \neq T_j$).

Thus, in all cases, $\bdy D$ must meet a cusp corresponding to $K$.
\end{proof}

\begin{figure}
\begin{overpic}{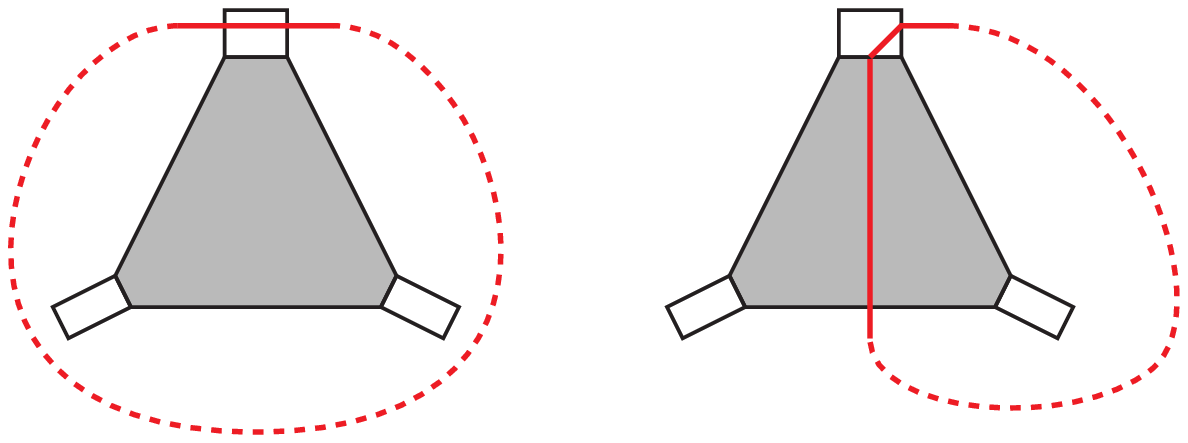}
\put(7,6){$K$}
\put(32,6){$K$}
\put(60,6){$K$}
\put(84,6){$K$}
\put(32,24){$\omega$}
\put(85,24){$\omega$}
\put(37.5,30){$\bdy \Delta$}
\put(92,30){$\bdy \Delta$}
\put(20,32.8){$\gamma$}
\put(72,33.5){$\gamma$}
\put(67,17){$\sigma$}
\end{overpic}
\caption{The two cases of Lemma \ref{lemma:int-2-circles}. Left: $\bdy \Delta$ must intersect a cusp of $K$, or else the dotted arc lies in a single white face, violating primeness. Right: $\bdy \Delta$ must intersect a cusp of $K$, because the two endpoints of the dotted arc are separated by an odd number of knot strands.}
\label{onecircle}
\end{figure}

\begin{figure}
\begin{overpic}{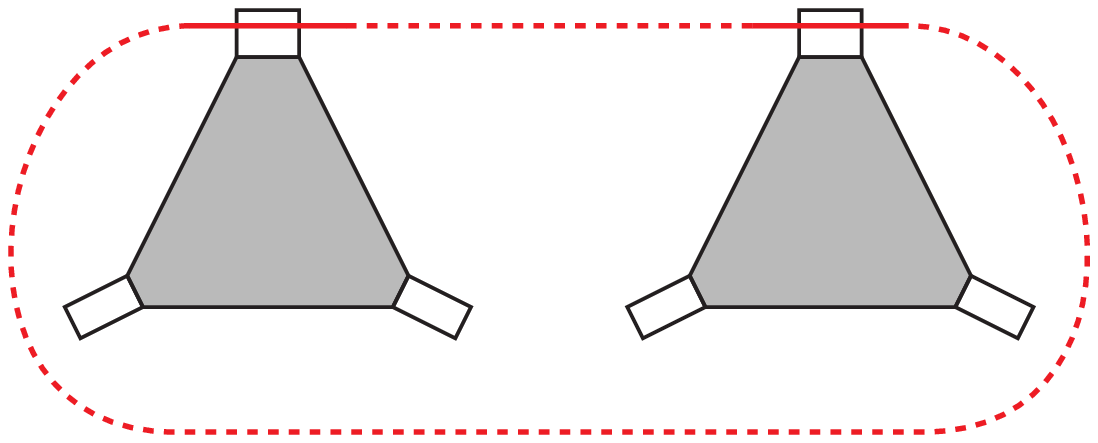}
\put(10,8){$K$}
\put(34,8){$K$}
\put(62,8){$K$}
\put(86,8){$K$}
\put(48,34){$\bdy \Delta$}
\end{overpic}
\caption{Lemma \ref{lemma:int-3-circles}: if $\bdy \Delta$ meets exactly two crossing circle cusps, in segments parallel to $\s$, then the diagram $D(K)$ cannot be twist-reduced.}
\label{fig:two-circles}
\end{figure}

\begin{lemma}\label{lemma:int-3-circles}
Suppose that $F \subset S^3 \setminus K$ is a closed, $c$--incompressible surface. Then $F$ must intersect at least $3$ crossing circles.
\end{lemma}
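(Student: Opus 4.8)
The plan is to argue by contradiction: suppose that $F$ meets at most two crossing circles. Lemma~\ref{lemma:int-1-circle} rules out zero crossing circles, and by Lemma~\ref{lemma:int-2-circles} there is a normal disk $\Delta \subset F^\circ$ meeting at least two \emph{distinct} crossing circles. Hence $F$ meets exactly two crossing circles, say $C_1$ and $C_2$, and $\Delta$ meets exactly $C_1$ and $C_2$. By normality condition~\eqref{i:bdy-compression} (each cusp torus $T_i$ contributes just one boundary face to the polyhedron $P$ containing $\Delta$), the loop $\bdy\Delta$ crosses the boundary face of $T_1$ exactly once and that of $T_2$ exactly once. Since $F$ is closed it is disjoint from $K$, so $F^\circ$ is disjoint from $K$ as well, and therefore $\bdy\Delta$ meets no boundary face corresponding to a strand of $K$.

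Next I would pin down the combinatorics of $\bdy\Delta \subset \bdy P$. By Lemma~\ref{lemma:normal-in-disks}, every arc of $\bdy\Delta$ in a shaded face runs from the crossing--circle cusp at one ideal vertex to the opposite edge, which is shared with a white face; so each shaded face met by $\bdy\Delta$ is ``attached'' to one of the two cusp crossings. Since white faces are never adjacent to one another (checkerboard colouring), it follows that the cyclic word of $\bdy\Delta$ reads
\[
\omega_a \; - \; (\text{segment through the cusp of } C_1) \; - \; \omega_b \; - \; (\text{segment through the cusp of } C_2) \; - \; \omega_a ,
\]
where $\omega_a,\omega_b$ are white faces (possibly equal) and, by Lemma~\ref{lemma:cusp-curve}, each of the two segments is either an $\s$--segment (parallel to the shaded faces, meeting no shaded face) or a \emph{diagonal} segment bundled with exactly one shaded face of the corresponding crossing disk. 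This leaves three cases, according to how many of the two segments are diagonal.

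If neither segment is diagonal, then $\bdy\Delta$ is disjoint from the shaded faces, so Remark~\ref{rmk:poly-to-projection} converts it into a simple closed curve in the plane of projection that cuts through the twist regions of $C_1$ and of $C_2$ exactly once each and is otherwise disjoint from $K$. Sliding this curve so that it passes through a single crossing of each of those two twist regions, we obtain a curve meeting $D(K)$ in exactly two crossings lying in \emph{different} twist regions, contradicting the twist--reduced hypothesis; this is precisely the situation of Figure~\ref{fig:two-circles}. If exactly one segment is diagonal, the parity argument from Case~2 of the proof of Lemma~\ref{lemma:int-2-circles} applies: a diagonal segment shifts $\bdy\Delta$ across an odd number of knot strands, so closing up $\bdy\Delta$ would require a second odd shift, either another diagonal segment or a crossing of $K$; both are excluded (the latter because $F$ is closed), so this case is vacuous. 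The remaining case, in which both segments are diagonal, is the one I expect to be the main obstacle: here $\bdy\Delta$ passes through exactly one shaded face of $D_1$ and one of $D_2$, and I would push it down to a curve in the plane of projection (extending the dictionary of Remark~\ref{rmk:poly-to-projection} to accommodate the two shaded arcs), show by tracing the two diagonal cusp segments that it can be isotoped to meet $D(K)$ in at most two crossings, and then conclude as before: two crossings would lie in the distinct twist regions of $C_1$ and $C_2$, contradicting twist--reducedness, while fewer than two crossings would force the curve to be inessential or to cut off a bigon, contradicting the primeness of $D(K)$ (equivalently, the minimality of $F \cap \bigcup_i D_i$ guaranteed by Lemma~\ref{lemma:pushout} and Corollary~\ref{cor:min-ci}).

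Thus the bulk of the work lies in the two--diagonal case: translating the passage of $\bdy\Delta$ through a crossing disk and a diagonal cusp segment into an explicit curve in the diagram $D(K)$, and bookkeeping carefully enough to see exactly which hypothesis it violates. The no--diagonal case is essentially Figure~\ref{fig:two-circles}, and the single--diagonal case disappears for parity reasons, so neither of those should require much beyond what is already in Lemma~\ref{lemma:int-2-circles} and Remark~\ref{rmk:poly-to-projection}.
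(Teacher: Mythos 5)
Your setup (exactly two crossing circles, a single normal disk $\Delta$ whose boundary crosses each of the two cusp faces exactly once and meets no $K$--cusp) and your handling of the zero-diagonal and one-diagonal configurations agree with the paper's endgame: the zero-diagonal configuration is exactly Figure \ref{fig:two-circles}, and the parity exclusion of the one-diagonal configuration is Case 2 of Lemma \ref{lemma:int-2-circles}. The gap is the two-diagonal case, and it is not merely unfinished --- the contradiction you hope to extract there does not exist in general. In that configuration the corresponding curve in the projection plane crosses exactly one strand of $K$ at each of the two twist regions and runs through the region between the two strands at each crossing disk; it meets $K$ in two points in the interiors of edges, not in two crossings, so twist-reducedness says nothing about it, and primeness only says it bounds a disk free of crossings, which can genuinely happen. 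For instance, in a standard two-twist-region (double twist knot) diagram with each crossing circle placed at the outer end of its twist region, such a curve exists and bounds a disk meeting $K$ in a single crossing-free arc, violating neither hypothesis. So an arbitrary normal disk meeting both cusps cannot be ruled out by the diagrammatic argument you sketch.

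The paper avoids this case rather than confronting it, and that selection step is what your proposal is missing. By Lemma \ref{lemma:cusp-curve}, each component of $\bdy F^\circ$ on a crossing-circle cusp $T_i$ carries $n_i-1 \geq 5$ segments parallel to $\s$ and only two diagonal segments; since $F$ is closed, Lemma \ref{lemma:int-2-circles} forces every normal disk meeting a crossing-circle cusp to meet a second one, and normality condition \eqref{i:bdy-compression} lets it meet each cusp in at most one segment. Counting the many $\s$--parallel segments on $T_i$ against the mere two diagonal segments available per boundary component on $T_j$ (and vice versa) produces a disk $\Delta$ whose segments on \emph{both} $T_i$ and $T_j$ are parallel to $\s$. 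For that disk only your zero-diagonal configuration can occur, and the Figure \ref{fig:two-circles} twist-reduced contradiction finishes the proof. So the repair is not a finer analysis of the two-diagonal disk but a pigeonhole choice of a better disk --- which is also where the largeness of $n_i$ enters this lemma.
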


\begin{proof} Suppose $F$ meets strictly fewer than three crossing circles.
By Lemmas \ref{lemma:int-1-circle} and \ref{lemma:int-2-circles}, $F$ must meet exactly two distinct crossing circles, $C_i$ and $C_j$.

Recall that Lemma \ref{lemma:cusp-curve} implies that each component of intersection between $\bdy F^\circ$ and the cusp torus $T_i$ (resp. $T_j$) contains $n_i -1 \geq 5$ (resp. $n_j-1 \geq 5$) segments parallel to $\s$, and only two other segments. By Lemma \ref{lemma:int-2-circles}, a normal disk $\Delta \subset F$ cannot intersect $T_i$ only. It follows that some disk $\Delta \subset F$ intersects each of $T_i$ and $T_j$ in a segment parallel to $\s$.

Consider how the curve $\bdy \Delta$ can close up. This curve cannot meet the ideal vertices corresponding to $K$, and it also cannot meet any additional shaded faces (otherwise Lemma \ref{lemma:normal-in-disks} would force $\Delta$ to run into an additional crossing circle $C_k$). The only remaining possibility is that $\bdy \Delta$ runs through a white face from $T_i$ to $T_j$, and then through another white face back to $T_i$. Now, Figure \ref{fig:two-circles} shows that we can use Remark \ref{rmk:poly-to-projection} to find a loop in the projection plane corresponding to $\bdy \Delta$ that will intersect $K$ four times, with two intersections adjacent to the twist region of $C_i$ and the remaining intersections adjacent to the twist region of $C_j$. This violates the hypothesis that $D(K)$ is twist-reduced.
\end{proof}

\begin{lemma}\label{lemma:multiple-int}
Every cusp $T_i$ of a crossing circle $C_i$ contains an even number of components of $\partial F^\circ$. Furthermore, if $K$ is a knot and $F$ is a closed surface, then every cusp $T_i$ met by $F^\circ$ contains at least $4$ components of $\partial F^\circ$.
\end{lemma}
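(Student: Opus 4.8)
The first claim is purely homological: in $S^3 \setminus J$, the surface $F$ meets a neighborhood of the crossing circle $C_i$ in a collection of meridians of $C_i$, so the algebraic intersection number of $\bdy F^\circ$ with the longitude of $T_i$ must be zero. Since every component of $\bdy F^\circ \cap T_i$ is a meridian of $C_i$ (by the construction of $F^\circ$ from a meridional or closed surface $F$), these components come with signs, and the signed count is zero. But in fact all components are parallel and oriented coherently as boundary curves of the surface $F^\circ$ — one should instead argue via the crossing disk $D_i$: each component of $F \cap D_i$ is an essential arc in $D_i$ with both endpoints on $C_i$ (Lemma \ref{lemma:pushout}), so the points of $F \cap C_i$ are partitioned into pairs (the two endpoints of each arc), whence $\#(F \cap C_i)$ is even; and by Corollary \ref{cor:min-ci}, $\#(F \cap C_i)$ equals the number of components of $\bdy F^\circ \cap T_i$. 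This gives the parity statement. So the first sentence is essentially a restatement of Lemma \ref{lemma:pushout} together with Corollary \ref{cor:min-ci}, and I would present it in one or two lines.

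The substance is the second claim: when $K$ is a knot and $F$ is closed, every cusp $T_i$ that $F^\circ$ meets carries at least $4$ boundary components — equivalently, $\#(F \cap C_i) \geq 4$ for every crossing circle $C_i$ that $F$ hits. By the parity statement it suffices to rule out $\#(F \cap C_i) = 2$. So suppose $F$ meets $C_i$ exactly twice; then $\bdy F^\circ \cap T_i$ is a single normal curve $\beta$, which by Lemma \ref{lemma:cusp-curve} consists of $(n_i - 1)$ segments parallel to $\s$ plus two diagonal segments. The plan is to analyze the single arc of $F \cap D_i$, whose two endpoints are the two points of $F \cap C_i$: cutting $F$ along this arc (or rather, looking at $F^\circ$ near $T_i$) shows that $\bdy F^\circ \cap T_i = \beta$ is \emph{connected}, i.e.\ a single simple closed curve wrapping once around the meridian direction. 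I want to track this single curve $\beta$ through the normal disks of $F^\circ$ and derive a contradiction with primeness or twist-reducedness of $D(K)$, much as in Lemmas \ref{lemma:int-2-circles} and \ref{lemma:int-3-circles}. Concretely: the normal disks of $F^\circ$ meeting $T_i$ contribute arcs of $\beta$; each such disk meets $T_i$ in exactly one arc of $\bdy \Delta$ (by normality, condition \eqref{i:bdy-compression}), so the number of normal disks of $F^\circ$ touching $T_i$ equals the number of arcs of $\beta$ in boundary faces, which is a fixed small number determined by how the $(n_i - 1)$ shaded-parallel segments and $2$ diagonal segments of $\beta$ are distributed among the two boundary rectangles of $T_i$. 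Since $n_i - 1 \geq 5$, most of these arcs are shaded-parallel segments lying in a single boundary rectangle, and by Lemma \ref{lemma:int-2-circles} each normal disk carrying such a segment must also hit a cusp of $K$. Because $K$ is a knot (one component), these punctures all lie on the \emph{same} cusp torus, and I would exploit the resulting constraint: following $\beta$ around, together with the arcs of $\bdy F^\circ$ on the cusp of $K$ and on the white faces, produces a closed curve in the projection plane of $K$ meeting $K$ in a controlled, small number of points and hitting at most one twist region, contradicting primeness or twist-reducedness of $D(K)$ (compare Figures \ref{onecircle} and \ref{fig:two-circles}).

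The main obstacle I anticipate is the bookkeeping in that last step: with a single boundary curve $\beta$ on $T_i$ having $n_i - 1 \geq 5$ parallel segments, there are several combinatorial patterns for how $\beta$ threads through the white faces and the $K$-cusps between consecutive visits to $T_i$, and I need to check that \emph{every} such pattern yields a forbidden curve in $D(K)$ (either a prime-violating bigon-free loop through $K$ twice, or a twist-reduced–violating loop through $K$ four times meeting only the twist region of $C_i$). The hypothesis $h(D) \geq 6$, hence $n_i \geq 6$ and $n_i - 1 \geq 5$, should be exactly what forces at least two ``return trips'' through the white faces, and I expect the knot hypothesis (single $K$-cusp) is what collapses the remaining cases. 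I would organize the argument by first reducing to: some normal disk $\Delta$ of $F^\circ$ meets $T_i$ in a shaded-parallel segment and meets no other crossing-circle cusp, then applying Lemma \ref{lemma:int-2-circles} to force $\Delta$ into the $K$-cusp, and finally using Remark \ref{rmk:poly-to-projection} to translate the resulting loop into the projection plane and invoke primeness/twist-reducedness — essentially running the proofs of Lemmas \ref{lemma:int-2-circles} and \ref{lemma:int-3-circles} one more time, but now with the extra leverage that $\beta$ is a single curve rather than one of two.
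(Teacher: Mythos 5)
The parity statement is fine as you prove it: pairing the points of $F \cap C_i$ as the endpoints of the essential arcs of $F \cap D_i$ (Lemma \ref{lemma:pushout}), and matching these with components of $\bdy F^\circ$ on $T_i$ via Corollary \ref{cor:min-ci}, is a correct one-line argument; the paper instead just observes that $F$ is separating, and either route works.

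The second claim is where the substance lies, and there your proposal has a genuine gap: what you offer is a plan whose key step (the ``bookkeeping'') is exactly the step that would fail. The intended contradiction does not exist at the level you are working. A normal disk that meets $T_i$ in a segment parallel to $\s$ and also meets the cusp of $K$ is not forbidden by anything --- it is the generic, legal configuration (these are precisely the disks that carry area in Section \ref{sec:length}) --- so forcing disks into the $K$--cusp via Lemma \ref{lemma:int-2-circles} and translating to the projection plane via Remark \ref{rmk:poly-to-projection} produces loops through $K$ that primeness and twist-reducedness do not prohibit. Your preliminary reduction is also unjustified: a disk adjacent to $T_i$ may meet other crossing-circle cusps, in which case Lemma \ref{lemma:int-2-circles} says nothing. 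Most fundamentally, the hypothesis that $K$ is a knot enters your sketch only as ``all $K$--punctures lie on one cusp torus,'' which has no diagrammatic force near a single twist region (locally, a knot and a link look identical there), so a purely local, projection-plane argument cannot be what rules out $\#(F\cap C_i)=2$. (A smaller slip: if $\#(F\cap C_i)=2$ then $\bdy F^\circ$ has \emph{two} meridional components on $T_i$, one for each point of $F\cap C_i$, not a single curve $\beta$.)

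The missing idea is a short global separation argument, which is the paper's entire proof: since $F$ is closed it separates $S^3$, and since $K$ is connected it lies on one side of $F$. Every essential arc of $F\cap D_i$ separates the two strands of $K$ puncturing the crossing disk $D_i$, so a path in $D_i$ joining the two punctures crosses $F$ once for each arc of $F \cap D_i$; as both endpoints of this path lie on $K$, hence on the same side of $F$, the number of such arcs is even. Therefore $\#(F\cap C_i)$, which is twice the number of arcs, is at least $4$ whenever it is nonzero. No normal-form or diagrammatic combinatorics is needed for this step.
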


\begin{proof}
The first conclusion is an immediate consequence of the fact that $F$ is separating.

For the second conclusion, suppose that $F$ is a closed surface and $K$ is a knot. Since $F$ is closed, it must separate $S^3$ into two components. The knot $K$ must lie in one of these components. But every arc of $F \cap D_i$ separates the two strands of $K$ that puncture the disk $D_i$. Therefore, since $K$ lies on one side of $F$, the arcs of $F \cap D_i$ must come in pairs. Hence, if $F$ intersects a crossing circle $C_i$ at all, it must meet it at least $4$ times.
\end{proof}

\begin{corollary}\label{cor:sum-up}
If $F$ is closed, the punctured surface $F^\circ$ must meet the crossing circle cusps at least $b$ times, where $b \geq 12$ if $K$ is a knot, and $b \geq 6$ if $K$ is a link.
\end{corollary}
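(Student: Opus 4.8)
The plan is simply to combine the two preceding lemmas and do the bookkeeping, interpreting ``$F^\circ$ meets the crossing circle cusps $b$ times'' as the total number of components of $\bdy F^\circ$ lying on the cusp tori $T_i$ of the crossing circles (equivalently, by Corollary \ref{cor:min-ci}, the number of intersection points of $F$ with $\bigcup_i C_i$, since each meridional intersection point of $F$ with $C_i$ becomes one boundary curve of $F^\circ$ on $T_i$). First I would invoke Lemma \ref{lemma:int-3-circles}: since $F$ is closed and $c$--incompressible, it must intersect at least three distinct crossing circles, say $C_{i_1}, C_{i_2}, C_{i_3}$. For each of these, $F^\circ$ has at least one boundary component on the corresponding cusp torus $T_{i_k}$.

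Next I would apply Lemma \ref{lemma:multiple-int} to each $T_{i_k}$ that is met by $F^\circ$. In the general (link) case, that lemma says each such cusp carries an \emph{even}, hence at least $2$, number of components of $\bdy F^\circ$; summing over the three cusps gives $b \geq 3 \cdot 2 = 6$. In the case that $K$ is a knot, the same lemma upgrades this to at least $4$ components on each cusp met by $F^\circ$, so summing over the (at least) three crossing circles gives $b \geq 3 \cdot 4 = 12$. This yields both claimed bounds.

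There is essentially no obstacle here: the corollary is a straightforward tallying of Lemmas \ref{lemma:int-3-circles} and \ref{lemma:multiple-int}. The only point requiring a word of care is making explicit that ``number of times $F^\circ$ meets the crossing circle cusps'' is counted as the number of boundary components of $F^\circ$ on the $T_i$ (as opposed to, say, a geometric intersection number in some polyhedron), and that contributions from distinct crossing circles are disjoint and therefore add, so that the per-cusp lower bounds of Lemma \ref{lemma:multiple-int} may legitimately be summed over the at-least-three crossing circles supplied by Lemma \ref{lemma:int-3-circles}.
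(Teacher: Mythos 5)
Your proposal is correct and follows exactly the paper's argument: the corollary is deduced immediately by combining Lemma \ref{lemma:int-3-circles} (at least three crossing circles met) with Lemma \ref{lemma:multiple-int} (at least $2$, respectively $4$ when $K$ is a knot, boundary components of $F^\circ$ on each cusp that is met), giving $b \geq 6$ and $b \geq 12$. Your extra care about how the count $b$ is interpreted matches the paper's usage, so there is nothing to add.
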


\begin{proof}
Immediate from Lemmas \ref{lemma:int-3-circles} and \ref{lemma:multiple-int}.
\end{proof}

\section{Combinatorial length}\label{sec:length}

The lemmas in the previous section give us a lot of control over the number of times that $F^\circ$ meets the cusps $T_i$ corresponding to the crossing circles.
To prove Theorems \ref{thm:closed} and \ref{thm:meridional}, we need to show that each component of $\bdy F^\circ\cap T_i$ makes a substantial contribution to the Euler characteristic of $F^\circ$, hence to that of $F$ as well.

This can be done in one of two ways: either by estimating the \emph{geometric length} of each component of $\bdy F^\circ$ on a maximal cusp corresponding to the crossing circle $C_i$, or to estimate its \emph{combinatorial length} in the sense of bounding the complexity of normal disks comprising $F^\circ$. The paper of Futer and Purcell contains readily applicable estimates on both combinatorial length and geometric length \cite[Theorem 3.10 and Proposition 5.13]{futer-purcell:surgery}, and either result would suffice for Theorem \ref{thm:closed}. We choose to pursue the combinatorial approach, because this is the approach that will generalize to meridional surfaces in Theorem \ref{thm:meridional}.

The notion of combinatorial length was developed by Lackenby, as part of his study of Dehn surgeries on alternating knots \cite{lackenby:surgery}. The main idea is that the Euler characteristic of a surface can be controlled by understanding the intersections between that surface and the truncated ideal vertices in an ideal polyhedral decomposition.

Let us recap the key definitions. Every normal disk $\Delta \subset F^\circ$ has a well-defined \emph{combinatorial area}, computed using the dihedral angles of the polyhedra in a manner that mimics the area formula for hyperbolic polygons.

\begin{define}Let $D$ be a normal disk in a right-angled ideal polyhedron $P$, with the boundary faces of $P$ lying on $\partial M$. Let $n$ be the number of interior edges of $P$ crossed by $\partial D$. Then the \emph{combinatorial area} of $D$ is defined to be
$$area(D)=\frac{\pi}{2}n+\pi|\partial D \cap \partial M|-2\pi.$$
Furthermore, the combinatorial area of a normal surface $H$ is defined to be the sum of the combinatorial areas of all of its constituent normal disks and is denoted $area(H)$.
\end{define}

\begin{prop}[Gauss--Bonnet Theorem] Let $H\subset M$ be a normal surface in
a $3$--manifold with a right-angled polyhedral decomposition. Then
$$area(H) = -2\pi \chi (H).$$
\end{prop}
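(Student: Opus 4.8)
The plan is to compute $\chi(H)$ directly from the cell structure that the polyhedral decomposition induces on $H$, and to check algebraically that the answer equals $-\frac{1}{2\pi}\,area(H)$. Since both $\chi$ and $area$ are additive over the normal disks comprising $H$, the whole computation reduces to understanding the local combinatorics of a single normal disk together with the multiplicities with which disks, face--arcs, and edge--points are shared. Concretely, give $H$ the CW structure whose $2$--cells are the normal disks $D$ in which $H$ meets the polyhedra, whose $1$--cells are the arcs in which $H$ meets the (interior and boundary) faces, and whose $0$--cells are the points in which $H$ meets the (interior and boundary) edges. For a normal disk $D$ write $n(D)$ for the number of interior--edge crossings of $\partial D$ and $p(D)=|\partial D\cap\partial M|$ for the number of arcs of $\partial D$ lying in boundary faces.

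Next I would record the local facts. Since no two boundary faces are adjacent, each boundary--face arc of $\partial D$ is flanked by two interior--face arcs; hence $\partial D$ has $2p(D)$ corners on boundary edges and $n(D)$ corners on interior edges, so $D$ is a polygon with $n(D)+2p(D)$ sides, of which $p(D)$ lie in boundary faces and $n(D)+p(D)$ lie in interior faces. The sharing multiplicities are: each interior--face arc lies on exactly two normal disks (one in the polyhedron on each side of that face); each boundary--face arc lies on exactly one normal disk; each interior--edge $0$--cell is a corner of four normal disks, because the decomposition is right--angled, so the dihedral angles $\pi/2$ around an interior edge sum to $2\pi$ and the edge has valence four; and each boundary--edge $0$--cell is a corner of exactly two normal disks, since locally a boundary edge of a truncated ideal polyhedron borders two boundary faces and one interior face, and $\partial H$ crosses such an edge by passing from one boundary face to the other through that single interior face. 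Verifying these multiplicities — especially the one for boundary edges — is where one must look carefully at the structure of truncated ideal polyhedra, and it is the only genuine subtlety in the argument; the rest is bookkeeping. A convenient cross--check is to count arc endpoints: an interior--face arc contributes $4$ endpoints at each interior--edge $0$--cell and $1$ at each boundary--edge $0$--cell, while a boundary--face arc contributes $2$ at each boundary--edge $0$--cell.

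Summing the local data: the number of $2$--cells is $\sum_D 1$; the number of $1$--cells is $\frac12\sum_D\bigl(n(D)+p(D)\bigr)+\sum_D p(D)=\frac12\sum_D n(D)+\frac32\sum_D p(D)$; and the number of $0$--cells is $\frac14\sum_D n(D)+\sum_D p(D)$ (the latter from the endpoint count above). Therefore
\begin{align*}
\chi(H) &= \Bigl(\tfrac14\textstyle\sum_D n(D) + \sum_D p(D)\Bigr) - \Bigl(\tfrac12\sum_D n(D) + \tfrac32\sum_D p(D)\Bigr) + \sum_D 1 \\
&= -\tfrac14\textstyle\sum_D n(D) - \tfrac12\sum_D p(D) + \sum_D 1 ,
\end{align*}
and multiplying by $-2\pi$ gives
$$-2\pi\,\chi(H) \;=\; \sum_D\Bigl(\tfrac{\pi}{2}\,n(D) + \pi\,p(D) - 2\pi\Bigr) \;=\; \sum_D area(D) \;=\; area(H),$$
which is the assertion. (Morally, the formula for $area(D)$ is itself just hyperbolic Gauss--Bonnet: collapsing each boundary--face arc of $D$ to an ideal point turns $D$ into an ideal polygon with $n(D)$ right--angled finite vertices and $p(D)$ ideal vertices, whose hyperbolic area is $\frac{\pi}{2}n(D)+\pi p(D)-2\pi$; the CW count above is the precise accounting that globalizes this observation to all of $H$.)
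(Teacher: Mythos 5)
Your argument is correct, and in fact it supplies a proof that the paper itself omits: the Proposition is stated without proof, being quoted from Lackenby's combinatorial-area machinery (the paper's reference \cite{lackenby:surgery}), and your cell-counting computation is essentially the standard argument for that result. The bookkeeping is right: with the induced CW structure on $H$, the multiplicities you use are each interior-face arc shared by two normal-disk sides, each boundary-face arc on one disk, each interior-edge point a corner of four disk corners, and each boundary-edge point a corner of two, giving $\chi(H)=-\tfrac14\sum_D n(D)-\tfrac12\sum_D p(D)+\sum_D 1$ and hence $-2\pi\chi(H)=\sum_D\bigl(\tfrac{\pi}{2}n(D)+\pi p(D)-2\pi\bigr)=area(H)$. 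Two points deserve to be made slightly more explicit. First, the valence-four claim for interior edges is not part of the definition of ``right-angled decomposition'' per se; it follows because the gluing is by isometries and (as the paper notes) yields a complete hyperbolic metric, so the dihedral angles around each interior edge sum to $2\pi$, forcing exactly four wedges of angle $\pi/2$ --- and in the actual application this is visible directly in the two-polyhedron decomposition of the augmented link complement. Without that angle condition the formula would fail, so this hypothesis is genuinely load-bearing. Second, your multiplicity statements should be read ``with multiplicity'' (a face could in principle be glued to another face of the same polyhedron, or a single disk could abut an arc or edge point more than once); the incidence counts, and hence the Euler characteristic computation, are unaffected. Your endpoint cross-check, though slightly garbled in phrasing (the correct reading is that four interior-face arcs end at each interior-edge $0$--cell, while one interior-face arc and two boundary-face arcs end at each boundary-edge $0$--cell), does confirm the counts. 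Modulo these small clarifications, the proof is complete.
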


Specializing to the case where $M = S^3 \setminus L$, we have a way to ``see'' combinatorial area from the crossing circles.

\begin{define}\label{def:comb-length}
Let $\Delta$ be a normal disk with respect to the polyhedral decomposition of $S^3 \setminus L$. Let $\gamma_1, \ldots, \gamma_n$ be the segments of $\bdy \Delta$ that lie in boundary faces corresponding to crossing circles, and suppose that $n \geq 1$. Then, for each $\gamma_i$, we define
$$\ell(\gamma_i, \Delta) = \area(\Delta) / n.$$
In other words, the area of $\Delta$ is distributed evenly among its intersections with the crossing circle cusps.
\end{define}

It is worth remarking that our definition of $\ell(\gamma_i, \Delta)$ differs slightly from the corresponding definition in Futer and Purcell \cite[Definition 4.9]{futer-purcell:surgery}. The difference is that the latter definition divides the area of $\Delta$ among \emph{all} the segments of $\Delta$ in boundary faces, not just those corresponding to crossing circles.
Definition \ref{def:comb-length} is designed to give stronger versions of some of the following estimates.

\begin{lemma}\label{lemma:length-area}
Let $S$ be any normal surface in the polyhedral decomposition of $S^3 \setminus L$. Then
$$\area(S) \: \geq \: \sum_i \ell(\gamma_i, \Delta),$$
where the sum is taken over all normal disks $\Delta \subset S$ and all segments of $\bdy \Delta$ in crossing circle cusps.
\end{lemma}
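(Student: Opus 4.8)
The plan is to expand both sides of the inequality over the normal disks of $S$ and then recognize the right-hand side as $\area(S)$ with a subcollection of nonnegative terms deleted. By definition $\area(S)=\sum_{\Delta\subset S}\area(\Delta)$, the sum running over all normal disks of $S$. Split these disks according to whether $\bdy\Delta$ meets a boundary face coming from a crossing circle. If $\Delta$ meets $n(\Delta)\geq 1$ such faces, then by Definition \ref{def:comb-length} each of the $n(\Delta)$ corresponding segments $\gamma_i$ satisfies $\ell(\gamma_i,\Delta)=\area(\Delta)/n(\Delta)$, so together these segments contribute $n(\Delta)\cdot\area(\Delta)/n(\Delta)=\area(\Delta)$ to the right-hand sum; a disk meeting no crossing-circle boundary face contributes nothing. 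Hence the right-hand side of the lemma equals $\sum_{\Delta:\,n(\Delta)\geq 1}\area(\Delta)$, and the lemma reduces to the assertion $\sum_{\Delta:\,n(\Delta)=0}\area(\Delta)\geq 0$.

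So the only substantive ingredient is the standard fact that every normal disk $\Delta$ has nonnegative combinatorial area. I would extract this from the area formula $\area(\Delta)=\frac{\pi}{2}k+\pi\,\abs{\bdy\Delta\cap\bdy M}-2\pi$, where $k$ is the number of interior edges crossed by $\bdy\Delta$: it suffices to prove $k+2\,\abs{\bdy\Delta\cap\bdy M}\geq 4$. One checks this by ruling out the finitely many putative normal disks violating it. Each such disk would be a bigon running across two faces that share more than one edge, a triangle running across three interior faces meeting at a common trivalent vertex, or a disk with a boundary arc running from a boundary face to an adjacent edge. None of these can occur in the polyhedra of Theorem \ref{thm:aug-geometry}: distinct faces share at most one edge; every trivalent vertex of the truncated polyhedra is incident to a boundary face, because all ideal vertices are $4$--valent; and $\bdy\Delta$ satisfies the conditions of Definition \ref{normal-def} (in particular it cannot lie in a single face). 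Compare \cite{lackenby:surgery, futer-purcell:surgery}.

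Putting the pieces together, $\area(S)=\sum_{\Delta\subset S}\area(\Delta)\geq\sum_{\Delta:\,n(\Delta)\geq 1}\area(\Delta)=\sum\ell(\gamma_i,\Delta)$, which is the claim. The only step needing genuine care is the nonnegativity of combinatorial area for the normal disks that miss all crossing-circle cusps; everything else is bookkeeping with Definition \ref{def:comb-length}.
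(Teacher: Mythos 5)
Your proposal is correct and is essentially the paper's own (one-line) argument: Definition \ref{def:comb-length} distributes each disk's area evenly over its segments in crossing-circle cusps, so the right-hand side equals $\sum_{\Delta \,:\, n(\Delta)\geq 1}\area(\Delta)$, and the inequality follows because the omitted disks have nonnegative combinatorial area. The paper treats that nonnegativity as standard background rather than re-proving it (it follows from Proposition \ref{prop:FPlength} together with the areas $0$ and $\pi$ of bigons and ideal triangles, plus the fact that a normal curve missing all boundary faces crosses an even number of interior edges by the checkerboard coloring and cannot cross only two, since distinct faces of a convex polyhedron meet in at most one edge). Your hands-on verification is sound in substance, but note that your list of putative negative-area disks is not exhaustive as stated --- for instance a disk crossing one interior edge and one boundary face whose edge is not adjacent to that boundary face, and the three-edge case need not a priori encircle a vertex --- although each such configuration is excluded by the same facts you already invoke (checkerboard parity, $4$--valent ideal vertices, and faces sharing at most one edge).
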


\begin{proof}
This is immediate, since Definition \ref{def:comb-length} ensures that the area of each disk is counted with the appropriate weight. Note that the inequality might be strict, because there may be normal disks in $S$ that have positive area but do not meet any crossing circle cusps.
\end{proof}

In the case where $S$ is the meridional, $c$--incompressible surface $F^\circ$, we have a lot of control over the areas of disks and the corresponding combinatorial areas.

\begin{prop}[Proposition 5.3 of \cite{futer-purcell:surgery}]\label{prop:FPlength}
Let $D$ be a normal disk in a polyhedron $P$ of a right-angled polyhedral decomposition of $M$, such that $\partial D$ passes through at least one boundary face. Let  $m=|\partial D \cap \partial M|$. If $D$ is not a bigon or an ideal triangle, then
$$area(D)\geq \frac{m\pi}{2}.$$
\end{prop}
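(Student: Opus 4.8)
The plan is to prove this by a direct case analysis on the combinatorics of the normal disk $D$, using the combinatorial area formula $\area(D) = \frac{\pi}{2} n + \pi m - 2\pi$, where $n$ is the number of interior edges of $P$ crossed by $\bdy D$ and $m = |\bdy D \cap \bdy M|$ is the number of boundary faces met. Rearranging, the claimed inequality $\area(D) \geq \frac{m\pi}{2}$ is equivalent to $\frac{\pi}{2} n + \pi m - 2\pi \geq \frac{m\pi}{2}$, i.e. to $n + m \geq 4$. So the entire content of the proposition is: \emph{every normal disk that meets at least one boundary face, and is neither a bigon nor an ideal triangle, satisfies $n + m \geq 4$.} I would state this reformulation first, since it makes the subsequent bookkeeping transparent.

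Next I would enumerate the small cases $n + m \leq 3$ and rule all of them out under the stated hypotheses, using the normality conditions of Definition \ref{normal-def}. Since $D$ meets at least one boundary face, $m \geq 1$. The conditions to eliminate are $(n,m) \in \{(0,1),(1,1),(2,1),(0,2),(1,2),(0,3)\}$ (the cases with $m = 0$ are excluded by hypothesis, and I should also recall that a normal disk with $\bdy D$ lying in a single face is forbidden by condition \eqref{i:loop}, so $n + m \geq 2$ always, which already kills $(0,1)$). For the remaining cases: if $m = 1$ and $n \leq 2$, then $\bdy D$ is a curve meeting the polyhedron's $1$-skeleton in at most three points total (one boundary-face arc plus at most two edge crossings), and one checks that such a curve either violates condition \eqref{i:bent-arc} (an arc in a face with both endpoints on the same edge, or on a boundary face and an adjacent edge) or cannot close up as an embedded essential curve on $\bdy P$ at all — a small combinatorial check using $4$-valence of ideal vertices and the truncation picture. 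If $m = 2, n \leq 1$ or $m = 3, n = 0$, then $D$ is an ideal triangle or bigon or an even simpler polygon: $m + n = 3$ with $n = 0$ gives an ideal triangle (excluded by hypothesis), $m + n = 2$ gives a bigon (excluded by hypothesis), and $(1,2)$ would force an arc between a boundary face and an adjacent edge, again violating \eqref{i:bent-arc}. I would organize this as a short table, treating the "arc with both endpoints on the same edge / on adjacent boundary-face-and-edge" obstruction as the workhorse.

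The main obstacle I anticipate is being careful and complete in the case $m = 1$, $n = 2$: here $\bdy D$ has one arc on a boundary face and is otherwise cut into arcs by exactly two interior-edge crossings, and one must verify that \emph{every} way of closing this up on $\bdy P$ either repeats an edge (violating \eqref{i:edge-compression}), bends back to the same edge or to a boundary-face/adjacent-edge pair (violating \eqref{i:bent-arc}), or fails to be embedded. This is where the geometry of the truncated ideal polyhedron — that truncation corners are triangles and ideal vertices are $4$-valent (Theorem \ref{thm:aug-geometry}) — gets used, and it is easy to overlook a configuration. I would handle it by observing that a normal arc in a face with endpoints on two \emph{distinct} non-adjacent edges "turns a corner" of the face, and then counting: with only two edge-crossings and one boundary arc, $\bdy D$ can visit at most two interior faces and the one boundary face, which is too few faces for an embedded normal loop once the adjacency restrictions of \eqref{i:bent-arc} are imposed. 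Everything else is immediate arithmetic from the area formula.
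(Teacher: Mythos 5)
Your statement is one the paper does not prove at all: it is quoted, with citation, from Futer--Purcell \cite{futer-purcell:surgery}, so there is no internal argument to compare yours against; it must be judged on its own. Your opening reduction is correct and surely unavoidable: since $\area(D)=\frac{\pi}{2}n+\pi m-2\pi$, the inequality $\area(D)\ge \frac{m\pi}{2}$ is exactly $n+m\ge 4$, so the content is ruling out normal disks with $m\ge 1$ and $n+m\le 3$ other than the bigon $(n,m)=(0,2)$ and the ideal triangle $(n,m)=(0,3)$. The problems are in how you dispose of the remaining cases. The case $(0,1)$ is not killed by condition (2) of Definition \ref{normal-def}: such a curve does not lie in a single face (it has one arc in a boundary face and one arc in an interior face crossing no edges); it must instead be excluded because the interior arc returns to the \emph{same} boundary face, i.e.\ has both endpoints on the same truncation edge, which is a reading of condition (3) you never invoke. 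More seriously, $(1,1)$, $(2,1)$ and $(1,2)$ are not actually ruled out by anything you write. Normality only guarantees that the crossed edge(s) are not adjacent to the boundary faces the curve enters; in the $(1,2)$ configuration nothing in Definition \ref{normal-def} alone produces the violation of (3) that you assert, and in the $(2,1)$ case your count is wrong (the boundary curve visits three interior faces, not two), so ``too few faces to close up'' is not a proof.

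What is genuinely needed to eliminate these cases is input from the right-angled geometry (equivalently, convexity and the Andreev-type restrictions on right-angled ideal polyhedra), not just the normality conditions and embeddedness. For example, in the $(1,1)$ case the two faces adjacent along the crossed edge $e$ would both be incident to the ideal vertex $v$ whose boundary face is crossed; placing $v$ at infinity in the upper half-space model, both faces are vertical planes, so $e$ would be a vertical geodesic with endpoint $v$, contradicting the non-adjacency forced by condition (3). A similar argument handles $(2,1)$: the two outer faces are vertical walls at $v$, and a hemispherical middle face orthogonal to two such walls is either impossible (opposite walls) or forces its center to be the far ideal endpoint of the edge the walls share, so that edge would pierce the face, contradicting convexity. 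The case $(1,2)$ requires a comparable (and less obvious) argument of this kind. These geometric steps are exactly the content of the proposition beyond the arithmetic rearrangement; as written, your proposal asserts the hard cases rather than proving them, so it has a genuine gap.
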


\begin{lemma}\label{lemma:pi-area}
Let $\Delta \subset F^\circ$ be a normal disk that meets $n$ crossing circle cusps, where $n \geq 1$. Then, for each segment $\gamma_i$ of $\partial \Delta$ in a crossing circle cusp,
$$\ell(\gamma_i, \Delta) \geq \max\{ \pi/n,\,  \pi/3 \}.$$
\end{lemma}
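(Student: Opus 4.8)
The plan is to reduce the statement to the combinatorial area formula and then apply two results already in hand: the area estimate of Proposition \ref{prop:FPlength} and the structural Lemma \ref{lemma:int-2-circles}. Write $m = |\partial\Delta \cap \partial M|$ for the number of arcs of $\partial\Delta$ lying in boundary faces. Since the $n$ segments $\gamma_1,\ldots,\gamma_n$ lie in boundary faces corresponding to crossing circles, and these are among all the boundary faces met by $\partial\Delta$, we always have $m \geq n \geq 1$. In particular $\partial\Delta$ passes through at least one boundary face, so $\Delta$ is neither a bigon nor an ideal triangle (both of which are disjoint from $\partial M$). Hence Proposition \ref{prop:FPlength} applies and gives $\area(\Delta) \geq \tfrac{m\pi}{2}$.

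The only extra input needed is the bound $m \geq 2$. If $n \geq 2$, this is immediate from $m \geq n$. If $n = 1$, then $\Delta$ is a normal disk meeting exactly one crossing circle cusp, so Lemma \ref{lemma:int-2-circles} guarantees that $\partial\Delta$ also meets a cusp corresponding to $K$; thus $\partial\Delta$ meets at least two boundary faces and $m \geq 2$ in this case too. Combining, $m \geq \max\{2,n\}$, and therefore $\area(\Delta) \geq \tfrac{m\pi}{2} \geq \max\{\pi,\tfrac{n\pi}{2}\}$. Dividing by $n$ yields
$\ell(\gamma_i,\Delta) = \area(\Delta)/n \geq \max\{\tfrac{\pi}{n},\tfrac{\pi}{2}\} \geq \max\{\tfrac{\pi}{n},\tfrac{\pi}{3}\}$,
which is the claimed inequality (in fact slightly stronger).

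There is no deep obstacle here; the substance is in the cited results. The main point to get right is the invocation of Proposition \ref{prop:FPlength}: one must confirm that $\Delta$ is not one of the excluded degenerate disk types, which is exactly where $m \geq 1$ is used. Implicitly this also rules out any would-be small-area disk with $m = 2$ but very few interior edges, since such a disk would violate $\area(\Delta) \geq \tfrac{m\pi}{2}$ and hence cannot occur for the essential surface $F^\circ$. The second thing to check is that the hypothesis of Lemma \ref{lemma:int-2-circles}, namely that $\Delta$ meets exactly one crossing circle cusp, matches the case $n = 1$ of the present statement; this is immediate from Definition \ref{def:comb-length} together with the fact that a normal disk lies in a single polyhedron (so it meets each crossing circle cusp torus in at most one boundary face).
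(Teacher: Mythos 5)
There is a genuine gap at the step where you invoke Proposition \ref{prop:FPlength}. You justify excluding the degenerate cases by asserting that bigons and ideal triangles are disjoint from $\partial M$; this is false, and indeed if it were true the exclusion in Proposition \ref{prop:FPlength} would be vacuous, since that proposition already assumes $\partial D$ passes through a boundary face. In the truncated polyhedra here, a bigon parallel to an edge of $P$ meets \emph{two} boundary faces and has combinatorial area $0$, and an ideal triangle parallel to a (triangular) shaded face meets \emph{three} boundary faces and has area $\pi < 3\pi/2$. These are exactly the disks the proposition must exclude. If a zero-area bigon meeting a crossing circle cusp could occur as a normal disk of $F^\circ$, the lemma would fail outright ($\ell(\gamma_i,\Delta)=0$), so ruling bigons out requires a real argument: the paper does this via Lemma \ref{lemma:normal-in-disks} (an arc of $F^\circ$ in a shaded face runs from the crossing-circle vertex to the opposite edge, whereas a bigon's boundary runs between two consecutive ideal vertices). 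Your later remark that small-area disks with $m=2$ ``cannot occur since they would violate $\area(\Delta)\geq m\pi/2$'' is circular: that inequality is only available once the disk is known not to be a bigon or ideal triangle, so it cannot be used to exclude them.

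The ideal-triangle case also matters quantitatively. The paper does not exclude ideal triangles at all; it handles them separately, using only $\area(\Delta)=\pi$, which yields $\ell(\gamma_i,\Delta)\geq \pi/n$, and it upgrades to $\pi/2$ only when $n>3$ (where $m\geq n>3$ forces $\Delta$ to be neither a bigon nor an ideal triangle, so Proposition \ref{prop:FPlength} applies). This is precisely why the stated bound is $\max\{\pi/n,\pi/3\}$ rather than your claimed $\max\{\pi/n,\pi/2\}$: a normal ideal triangle whose three boundary-face segments all lie at crossing circle cusps would have $\ell(\gamma_i,\Delta)=\pi/3$, and your argument does nothing to rule such a disk out. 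So as written the proof both relies on a false exclusion and asserts a strengthening of Lemma \ref{lemma:pi-area} that is not established; to repair it, exclude bigons via Lemma \ref{lemma:normal-in-disks} and treat ideal triangles separately as the paper does.
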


\begin{proof}
Let $m$ be the number of segments of $\Delta$ in \emph{all} boundary faces  (belonging either to $K$ or to a crossing circle).
By Lemma \ref{lemma:int-2-circles}, $m \geq 2$. Furthermore, by Lemma \ref{lemma:normal-in-disks}, $\Delta$ cannot be a bigon (because the boundary of a bigon runs between two consecutive ideal vertices). If $\Delta$ is an ideal triangle, then $\area(\Delta) = \pi$.
Thus, by Proposition \ref{prop:FPlength}, $\area(\Delta) \geq \pi$ in all cases. By Definition \ref{def:comb-length}, it follows that
$$\ell(\gamma_i, \Delta) \geq \pi/n.$$

It remains to show that $\ell(\gamma_i, \Delta) \geq \pi/3$. If $n \leq 3$, we are done by the previous paragraph. Alternately, if $n > 3$, Proposition \ref{prop:FPlength} gives $area(D)\geq \frac{m\pi}{2}$, where $m \geq n$. Thus
$$
\ell(\gamma_i, \Delta) \: = \:  \frac{\area(\Delta)}{n} \: \geq \: \frac{m\pi}{2n} \: \geq \: \frac{\pi}{2}. \qedhere
$$
\end{proof}

We can now complete the proof of Theorem \ref{thm:closed}.

\begin{proof}[Proof of Theorem  \ref{thm:closed}]
Let $F \subset S^3 \setminus K$ be a closed, $c$--incompressible surface. Isotope $F$ into a position that minimizes the intersection number with the crossing disks $D_i$. After drilling out the crossing circles, we obtain a surface $F^\circ = F \cap S^3 \setminus L$, which can be placed into normal form via the procedure of Lemma \ref{lemma:normalize}.

Let $b$ be the number of boundary components of $F^\circ$ on the crossing circle cusps of $S^3 \setminus L$. By Corollary \ref{cor:sum-up}, we have $b \geq 6$, with $b \geq 12$ in case $K$ is a knot. Furthermore, by Lemma \ref{lemma:cusp-curve}, each of these $b$ components consists of $(n_i + 1)$ segments in boundary faces, where $n_i \geq h(D)$. Thus, by Lemma \ref{lemma:pi-area}, each component of $\bdy F^\circ$ contributes at least $(h(D) + 1)\pi/3$ to the area of $F^\circ$.

Now, we may compute:
\begin{equation}\label{eq:closed-compute}
\begin{array}{r c l c l}
-2\pi( \chi(F) - b) & = & -2\pi \chi(F^\circ) & & \mbox{by the construction of } F^\circ \\
& = & \area(F^\circ) &  & \mbox{by the Gauss--Bonnet formula} \\
& \geq & \sum_i \ell(\gamma_i, \Delta) & & \mbox{by Lemma \ref{lemma:length-area} } \\
& \geq & \pi/3 \cdot b \cdot (h(D) +1) & & \mbox{by Lemma \ref{lemma:pi-area}.}
\end{array}
\end{equation}

We may compare the first and last terms to get
\begin{eqnarray}\label{eq:closed-finish}
\notag -2\pi( \chi(F) - b) & \geq & \pi/3 \cdot b \cdot (h(D) +1) \\
\notag - 6 \chi(F) + 6 b & \geq & b \,  h(D) + b \\
\notag -6 \chi(F) & \geq & b \, (  h(D) - 5). \\
\chi(F) & \leq & b/6 \, (5 -  h(D)).
\end{eqnarray}
Substituting $b \geq 6$ for links and $b \geq 12$ for knots gives the desired result.
\end{proof}

The same ideas, with one added ingredient, also prove Theorem \ref{thm:meridional}.

\begin{proof}[Proof of Theorem  \ref{thm:meridional}]
Let $F \subset S^3 \setminus K$ be a compact, connected, meridional, $c$--incompressible surface. Isotope $F$ into a position that minimizes the intersection number with the crossing disks $D_i$. Drill out the crossing circles, and normalize $F^\circ$ in $S^3 \setminus L$.

Unlike the setting of closed surfaces (that is, unlike Lemma \ref{lemma:int-1-circle}), it may happen that $F^\circ$ is disjoint from the crossing circle cusps, i.e.\ $F^\circ = F$. Then, by Lemma \ref{lemma:normal-in-disks}, each normal disk $\Delta \subset F^\circ$ must be disjoint from the shaded faces. In other words, $\bdy \Delta$ is a closed curve in the white projection plane, which intersects the cusps of $K$ some number of times. This closed curve bounds a disk $\Delta$ in polyhedron $P$, unique up to isotopy. Recall that $P$ is glued to $P'$ along all its white faces, and the gluing map is the identity on white faces. Thus we have an identical normal curve in $P'$, which again bounds a normal disk $\Delta'$ that is unique up to isotopy. Since $\Delta$ and $\Delta'$ are glued to each other along all their edges, we conclude that $F = F^\circ$ is a sphere punctured some number of times by $K$, and, by Remark \ref{rmk:poly-to-projection}, that it meets the projection plane for $K$ along the single closed curve $\bdy \Delta = \bdy \Delta'$.

Next, consider what happens if $\bdy F^\circ$ contains $b$ components along the crossing circle cusps, where $b > 0$. By Lemma \ref{lemma:multiple-int}, $b$ is even. If $b \geq 6$, then we argue exactly as in the proof of Theorem \ref{thm:closed}. The computations \eqref{eq:closed-compute} and \eqref{eq:closed-finish} in that proof produce the same estimate as for closed surfaces, namely
$$\chi(F) \: \leq \: 5 -   h(D).$$

If $b = 2$, then $\bdy F^\circ$ must intersect only one crossing circle $C_i$, and in particular every normal disk of $F^\circ$ has at most one segment along a crossing circle cusp. Thus Lemma \ref{lemma:pi-area} tells us that $\ell(\gamma,\Delta) \geq \pi$ for every segment $\gamma$ along $T_i$, hence each component of $\bdy F^\circ$ along $T_i$ contributes at least $(h(D) + 1)\pi$ to the area of $F^\circ$. As a consequence, the calculation of \eqref{eq:closed-compute} gives
$$ -2\pi( \chi(F) - b) \:  \geq \:  \pi \cdot b \cdot (h(D) +1).$$
After substituting $b=2$, this simplifies to
$$ \chi(F) \: \leq \: 1- h(D).$$

Similarly, if $b=4$, then $\bdy F^\circ$  intersects either one or two crossing circles. Consequently, every normal disk of $F^\circ$ has at most two segments along a crossing circle cusp. Thus Lemma \ref{lemma:pi-area} (with $n \leq 2$) tells us that $\ell(\gamma,\Delta) \geq \pi/2 $ for every segment $\gamma$ along along a crossing circle cusp $T_i$. Hence, each component of $\bdy F^\circ$ along $T_i$ contributes at least $(h(D) + 1)\pi/2$ to the area of $F^\circ$, and the calculation of \eqref{eq:closed-compute} gives
$$ -2\pi( \chi(F) - b) \:  \geq \:  \pi/2  \cdot b \cdot (h(D) +1).$$
After substituting $b=4$, this simplifies to
$$ \chi(F) \: \leq \: 3- h(D). \hfill
\qedhere
$$
\end{proof}

\bibliographystyle{hamsplain}
\bibliography{biblio}

\end{document}